\documentclass[12pt]{amsart}
\usepackage{amscd,amsmath,amssymb,enumerate,amsfonts,mathrsfs,txfonts}
\usepackage{comment}
\usepackage{hyperref}
\hypersetup{
    colorlinks=true,      
    urlcolor=blue}
\usepackage{xcolor}
\usepackage{tikz,pgfplots,pgf}
\usepackage{mathtools}          
\usepackage{tikz-cd}            
\usepackage[all]{xy}
\usepackage{lipsum}

\tikzset{node distance=3cm, auto}
\usetikzlibrary{calc} 
\usetikzlibrary{trees} 

\usepackage{vmargin}
\setpapersize{A4}
\setmargins{2.5cm}      
{1.5cm}                        
{16.5cm}                      
{23.42cm}                   
{10pt}                          
{1cm}                           
{0pt}                          
{2cm}

\newtheorem{theorem}{Theorem}[section]
\newtheorem{proposition}[theorem]{Proposition}
\newtheorem{definition}[theorem]{Definition}
\newtheorem{corollary}[theorem]{Corollary}

\newtheorem{lemma}[theorem]{Lemma}
\newtheorem{remark}[theorem]{Remark}

\newtheorem{example}[theorem]{Examples}

\def\A{\mathcal{A}}

\def\F{\mathcal{F}}

\def\I{\mathcal{I}}

\def\C{\mathcal{C}}
\def\L{\mathcal{L}}

\def\R{\mathbb{R}}

\def\U{\mathcal{U}}

\def\T{\mathcal{T}}

\def\C{\mathcal{C}}
\def\K{\mathbb{K}}
\def\N{\mathbb{N}}

\def\Lip{\mathrm{Lip}_0}
\def\Lipnorm{\mathrm{Lip}}

\def\Rad{\mathrm{Rad}}
\def\RAD{\mathrm{RAD}}

\allowdisplaybreaks

\begin{document}

\title[Lipschitz stable sequence classes]{Lipschitz stable sequence classes: an approach to Rademacher type and cotype of Lipschitz functions}


\author[D. Ruiz-Casternado]{D. Ruiz-Casternado}
\address[David Ruiz-Casternado]{Departamento de Matem{\'a}ticas, Universidad de Almer{\'i}a, Ctra. de Sacramento s/n, 04120 La Ca\~{n}ada de San Urbano, Almer\'ia, Spain.}
\email{drc446@ual.es}

\begin{abstract}
    In this paper, we extend sequence-class methods from linear and multilinear theory to the Lipschitz setting, highlighting the substantial differences that arise from the lack of linearity. First, we establish a general criterion for lifting a Lipschitz mapping between Banach spaces to a Lipschitz operator between sequence spaces, and we use it to define the class $\Pi_{(Z,Y)}^{\Lip}$ of $(Z,Y)$-summing Lipschitz functions, where $Z$ and $Y$ are sequence classes. We then introduce the notion of Lipschitz stable sequence class and show that $[\Pi_{(Z,Y)}^{\Lip},\pi_{(Z,Y)}^{\Lipnorm}]$ is a Banach Lipschitz ideal whenever the sequence classes satisfy this property. As applications, we present Rademacher type and cotype for Lipschitz functions and identify them with $(Z,Y)$-summing Lipschitz spaces for concrete choices of sequence classes. We prove that the type case forms a Banach Lipschitz ideal, whereas the cotype case does not, and we analyse composition and maximality for the type ideals.

		\vspace{5pt}
		\textbf{Keywords:} Lipschitz function, Lipschitz stable sequence class, type, cotype, Banach Lipschitz ideal.

		\vspace{5pt}
		\noindent
		\textbf{2020 Mathematics Subject Classification:} 46E15, 46E40, 46T20, 47L20.
	\end{abstract}
	
\maketitle

\section*{Introduction}

The theory of transformation of vector-valued sequences by linear and multilinear operators has been carried out especially in the last decade (see, e.g., \cite{BotCam-17, BotSan-25, BotSan-252, BotWoo-23, GalVil-15}). This thread intertwines Banach space geometry, probability in Banach spaces and the theory of operator ideals. The origins of these topics can be traced back to the mid-twentieth century, when the interplay between linear operators and summability properties of sequences began to be systematically explored, eventually leading to a deep and highly structured theory whose multilinear extensions are still under active development.

To be precise, the study of vector-valued sequence transformations by linear operators dates back to the 1950s thanks to Grothendieck \cite{Gro-55}, particularly through his inequality and the concept of nuclear operator. However, it was during the 1960s and 1970s that the theory gained a definitive form, especially with the contributions of Pietsch concerning absolutely $p$-summing operators. In this context, a linear operator is absolutely $p$-summing if it maps weakly $p$-summable sequences into absolutely $p$-summable sequences. This perspective—defining operator classes through the transformation of vector-valued sequences—proved to be remarkably fruitful, since it allowed the introduction of a wide variety of operator ideals, each corresponding to different sequence classes, such as weakly summable, unconditionally summable, or almost unconditionally summable sequences (see, e.g., \cite{BotBraJun-01, BotPelRue-14, DieJarTon-95, JunMat-98, Pie-80}).

Parallel to these developments, the notions of Rademacher type and cotype emerged in the early 1970s from the study of vector-valued random variables. The works of Dubinsky, Pe{\l}czy{\'n}ski and Rosenthal \cite{DubPelRos-72}, Maurey \cite{Mau-74}, and Hoffmann-J{\o}rgensen and Pisier \cite{HofPis-76} established that the behavior of sums of independent random variables in Banach spaces is deeply connected with intrinsic geometric properties of such spaces. These concepts quantify how closely the space resembles a Hilbert space from the probabilistic viewpoint. In fact, soon after their appearance, Rademacher type and cotype were extended from Banach spaces to linear operators, and recently to the multilinear setting. These classes have been extensively studied, and their properties are deeply connected with other linear and multilinear operator ideals (see, e.g., \cite{DieJarPie-01, Pis-86} for the linear case and \cite{BotCam-16, BotCamNas-24, BotFre-24, CamNasSou-25} for the multilinear setting). Specifically, thanks to the theory developed in \cite{BotCam-17}, we know that these subclasses of linear/multilinear operators are closely linked to the transformation of vector-valued sequences.

Our goal in this manuscript is to extend such study to the Lipschitz setting, as a natural step within the theory of nonlinear functional analysis. However, this passage from linear and multilinear operators to Lipschitz mappings is not merely formal: the lack of linear structure produces substantial differences in the sequence-class approach. In particular, the criterion that ensures a Lipschitz map between Banach spaces induces an operator between sequence spaces turns out to be weaker than its multilinear counterpart, and this phenomenon leads naturally to the notion of Lipschitz stable sequence class. Therefore, this paper should serve as a starting point for the systematic study of the transformation of vector-valued sequences by Lipschitz mappings.

Let $X$ and $E$ be Banach spaces. In this paper, our subject area is the set of zero-preserving Lipschitz functions from $X$ into $E$, denoted by $\Lip(X,E)$. It is well-known that it becomes a Banach space if we endow it with the Lipschitz norm
$$
\Lipnorm(f) = \sup_{\substack{x,y \in X\\ x\neq y}} \frac{\|f(x)-f(y)\|}{\|x-y\|} \qquad (f\in\Lip(X,E)).
$$

For the special case when $E$ reduces to the scalar field $\K$, we use the shorthand $\Lip(X) = \Lip(X,\K)$.

Building upon this framework, the concept of Lipschitz ideal arises as a nonlinear extension of the classical theory of operator ideals. The systematic study of such ideals was initiated by Farmer and Johnson in \cite{FarJoh-09}, who introduced the class of Lipschitz $p$-summing operators, extending the classical absolutely $p$-summing operators to the nonlinear context. This theory was subsequently formalised and developed in \cite{AchDahTur-20, AchRueSanYah-16, JimSepVil-14, Saa-17}.

After this introduction, Section~\ref{Sec 1} establishes the notation and background that will be used throughout the manuscript, including the main Banach-valued sequence spaces and the linear concepts of $p$-factorable, $p$-integral, and absolutely $p$-summing operators, together with their Lipschitz counterparts. Section~\ref{Sec 2} develops the sequence-class framework in the Lipschitz setting, emphasizing the differences that arise with respect to the linear and multilinear settings due to the absence of linearity. Given $f \in \Lip(X,E)$, we first prove a general criterion describing when the coordinatewise map $(x_n)_{n\in\N}\mapsto (f(x_n))_{n\in\N}$ is well-defined and Lipschitz. Based on this criterion, we define the class $\Pi_{(Z,Y)}^{\Lip}$ of $(Z,Y)$-summing Lipschitz mappings, where $Z$ and $Y$ are sequence classes. In particular, a sequence class $Z$ will be called Lipschitz stable if the normed space $(\Pi_{(Z,Z)}^{\Lip}(X,E),\pi_{(Z,Z)}^{\Lipnorm})$ coincides with $(\Lip(X,E),\Lipnorm)$. Moreover, some concrete examples are provided and we use these to derive that $[\Pi_{(Z,Y)}^{\Lip},\pi_{(Z,Y)}^{\Lipnorm}]$ is a Banach Lipschitz ideal under suitable assumptions on $Z$ and $Y$. Finally, Section~\ref{Sec 3} studies Rademacher type and cotype of Lipschitz functions in depth: we introduce these notions and characterize them as specific $(Z,Y)$-summing Lipschitz spaces, prove that the type-class forms a Banach Lipschitz ideal whereas the cotype-class does not, and study composition and maximality results for the type ideals.

\section{Background and notation}\label{Sec 1}

 Throughout this paper $X,E$ and $F$ will denote Banach spaces. As usual, $(\L(E,F),\left\|\cdot\right\|)$ stands for the Banach space of all bounded linear operators from $E$ into $F$ endowed with the canonical operator norm. For the special case $F=\K$, we denote by $\L(E,\K)=E^*$ the topological dual space of $E$. Similarly, $(\Lip(X,E),\Lipnorm)$ is the space of all zero-preserving Lipschitz functions from $X$ into $E$ equipped with the Lipschitz norm. Moreover, $\mathrm{FIN}(E)$ and $\mathrm{COFIN}(E)$ correspond to the sets of finite-dimensional subspaces and finite-codimensional closed subspaces of $E$, respectively. With this notation, given $Z \in \mathrm{FIN}(E)$, we denote by $\iota_Z^E$ the canonical inclusion operator from $Z$ into $E$. Analogously, given $Y \in \mathrm{COFIN}(E)$, $q_Y^E$ stands for the canonical quotient operator from $E$ into $E/Y$.

The following sequence spaces will be very useful to us along this work:
\begin{itemize}
	\item $\ell_p(E) = \left\{ (x_n)_{n\in\N} \in E^\N: \|(x_n)_{n\in\N}\|_{\ell_p(E)} = \left(\sum_{i=1}^\infty \|x_i\|_E^p \right)^{\frac{1}{p}} < \infty \right\}$, where $p \in [1,\infty)$.
	\item $\ell_\infty(E) = \left\{ (x_n)_{n\in\N} \in E^\N: \|(x_n)_{n\in\N}\|_\infty = \sup_{n\in\N}\|x_n\|_E  < \infty \right\}$.
	\item $c_0(E) = \left\{ (x_n)_{n\in\N} \in E^\N: \lim_{n\to \infty} \|x_n\|_E =  0 \right\}$.
	\item $\ell_p^w(E) = \left\{ (x_n)_{n\in\N} \in E^\N: \|(x_n)_{n\in\N}\|_{\ell_p^w(E)} = \sup_{\phi \in B_{E^*}} \left(\sum_{i=1}^\infty |\phi(x_i)|^p \right)^{\frac{1}{p}} < \infty \right\}$.
	\item $\Rad(E) = \left\{ (x_n)_{n\in\N} \in E^\N: \forall p\in (0,\infty), \sum_n r_n(t)x_n \text{ converges in } L_p([0,1],E) \right\}$, where $(r_n)_{n\in\N}$ are the Rademacher maps. Even more, $\Rad(E)$ becomes a Banach space endowed with the norm
	$$
	\|(x_n)_{n\in\N}\|_{\Rad(E)} = \left( \int_{0}^{1}\left\| \sum_{i=1}^{\infty} r_i(t)x_i \right\|_E^2 dt \right)^{\frac{1}{2}} \qquad ((x_n)_{n\in\N}\in \Rad(E)).
	$$
	\item $\RAD(E) = \left\{ (x_n)_{n\in\N} \in E^\N: \|(x_n)_{n\in\N}\|_{\RAD(E)} = \sup_{N\in\N}\|(x_k)_{k=1}^N\|_{\Rad(E)} < \infty \right\}$.
\end{itemize}

For additional details on these sequence spaces we refer the reader to \cite{BlaBotPelRue-11, BlaFouSch-07, DieJarTon-95}. Furthermore, $c_{00}(E)$ stands for the set of all finite sequences of elements of $E$ and $(e_n^{(j)})_{n\in\N}$ is the sequence with a $1$ in the $j$-th coordinate and zeros elsewhere.

Given $p\in[1,\infty)$, a bounded linear operator $T \in \L(E,F)$ is said to be:
\begin{itemize}
	\item $p$-factorable (see, e.g., \cite[19.4]{Pie-80}) if there is a factorization $\kappa_F \circ T: E \xrightarrow{S} L_p(\Omega,\mu) \xrightarrow{R} F^{**}$, where $\kappa_F$ is the canonical linear embedding of $F$ into its bidual, $(\Omega,\mu)$ is a measure space, $S\in\L(E,L_p(\Omega,\mu))$ and $R\in\L(L_p(\Omega,\mu),F^{**})$. We denote by $\F_p(E,F)$ the space of all $p$-factorable linear operators from $E$ into $F$, and it is a Banach space endowed with the norm $\|T\|_{\F_p} = \inf\{\|S\|\|R\|\}$ where the infimum is taken over all such factorizations of $T$. In \cite{AchTia-24} this Banach operator ideal was adapted to the Lipschitz setting, and denoted $[\F_p^{\Lip},\left\|\cdot\right\|_{\F_p^{\Lipnorm}}]$.
	\item $p$-integral (see, e.g., \cite[19.2]{Pie-80}) if there exists a factorization $\kappa_F\circ T: E \xrightarrow{S} L_\infty(\Omega,\mu)\xrightarrow{I_p} L_p (\Omega,\mu) \xrightarrow{R} F^{**}$, where $S\in\L(E,L_\infty(\Omega,\mu))$, $R\in\L(L_p(\Omega,\mu),F^{**})$ and $I_p: L_\infty(\Omega,\mu) \to L_p(\Omega,\mu)$ denotes the canonical embedding inclusion. The set of all $p$-integral linear operators from $E$ into $F$ is denoted by $\I_p(E,F)$, and it becomes a Banach space just defining $\|T\|_{\I_p} =\inf\{\|S\|\|R\|\}$ with the infimum being taken over all such factorizations of $T$. In \cite[Section 2.3]{AchRueSanYah-16} this notion was extended to the Lipschitz framework, and we denote by $[\I_p^{\Lip},\left\|\cdot\right\|_{\I_p^{\Lip}}]$ the Banach Lipschitz ideal of $p$-integral Lipschitz maps.
	\item absolutely $p$-summing (see, e.g., \cite[17.3]{Pie-80}) if there exists a constant $C>0$ such that
	\begin{equation}\label{eq 5}
	\left(\sum_{k=1}^{N} \|T(x_k)\|^p \right)^{\frac{1}{p}} \leq C \sup_{\phi\in B_{E^*}} \left(\sum_{k=1}^{N} |\phi(x_k)|^p \right)^{\frac{1}{p}}
	\end{equation}
	for all $N\in\N$ and any choice $(x_1,\ldots,x_N)\in E^N$. The set of all absolutely $p$-summing linear operators from $E$ into $F$ is denoted by $\Pi_p(E,F)$, and it becomes a Banach space endowed with the norm $\pi_p(T) = \inf\{C: (\ref{eq 5}) \text{ is satisfied} \}$. This notion was extended to the Lipschitz setting in \cite{FarJoh-09} and we denote by $[\Pi_p^{\Lip},\pi_p^{\Lipnorm}]$ the Banach Lipschitz ideal of $p$-summing Lipschitz operators.
\end{itemize}


\section{Lipschitz stability of sequence classes}\label{Sec 2}

Let us recall by \cite[Definition 2.1]{BotCam-17} that a sequence class $Z$ is an assignment that associates to each Banach space $E$ a vector subspace of $E^\N$, denoted $Z(E)$, endowed with a complete norm $\left\|\cdot\right\|_{Z(E)}$ such that $c_{00}(E) \subseteq Z(E) \subseteq \ell_\infty(E)$ with $\left\|\cdot\right\|_\infty \leq \left\|\cdot\right\|_{Z(E)}$, and $\|(e_n^{(j)})_{n\in\N}\|_{Z(\K)}=1$ for all $j \in \N$. Particular examples of sequence classes can be found in \cite[Example 2.2]{BotCam-17}.

If for any $(x_n)_{n\in\N} \in E^\N$ we have that $(x_n)_{n\in\N}\in Z(E)$ if and only if $\sup_{N\in\N} \|(x_k)_{k=1}^N\|_{Z(E)}<\infty$, then the sequence class $Z$ is called finitely determined. In such a case $\|(x_n)_{n\in\N}\|_{Z(E)}=\sup_{N\in\N}\|(x_k)_{k=1}^N\|_{Z(E)}$. $\RAD,\ell_p$ and $\ell_p^w$ for $p\in [1,\infty]$ are examples of finitely determined sequence classes.

Firstly, we analyse the transformation of Banach-valued sequences by means of Lipschitz functions.

\begin{theorem}\label{Th 1}
	Let $f\in \Lip(X,E)$ and let $Z,Y$ be sequence classes. Consider the following assertions:
	\begin{itemize}
		\item[$(i)$] If $(x_n)_{n\in\N},(y_n)_{n\in\N}\in Z(X)$ then $(f(x_n)-f(y_n))_{n\in\N}\in Y(E)$.
		\item[$(ii)$] The operator $\hat{f}: Z(X)\to Y(E)$ given by
		$$
		\hat{f}((x_n)_{n\in\N}) = (f(x_n))_{n\in\N} \qquad ((x_n)_{n\in\N} \in Z(X))
		$$
		is well-defined.
		\item[$(iii)$] $\hat{f} \in \Lip(Z(X),Y(E))$.
		\item[$(iv)$] There exists $C>0$ so that for all $N\in \N$ and all $(x_1,\ldots,x_N)$, $(y_1,\ldots,y_N)\in X^N$, we have
		$$
		\left\|(f(x_k)-f(y_k))_{k=1}^N\right\|_{Y(E)} \leq C\left\|(x_k-y_k)_{k=1}^N\right\|_{Z(X)}.
		$$
	\end{itemize}
	Then $(i) \Leftrightarrow (ii) \Leftarrow (iii) \Rightarrow (iv)$.  Moreover, if the sequence classes $Z$ and $Y$ are finitely determined, then $(iv) \Rightarrow (iii)$. Moreover, in this case, $\Lipnorm(\hat{f}) = \inf\{C\}$ with the infimum being taken over all such constants as in $(iv)$.
\end{theorem}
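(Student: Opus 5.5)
We treat the implications one at a time, using only the axioms of a sequence class: $Z(X)$ and $Y(E)$ are vector subspaces of $X^\N$ and $E^\N$, they contain $c_{00}$, and they carry complete norms dominating $\left\|\cdot\right\|_\infty$.

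For $(i)\Rightarrow(ii)$, take $(y_n)_{n\in\N}=0\in Z(X)$. Since $f(0)=0$, hypothesis $(i)$ gives $(f(x_n))_{n\in\N}\in Y(E)$, so $\hat f$ is well-defined. For $(ii)\Rightarrow(i)$, both $(f(x_n))_{n\in\N}$ and $(f(y_n))_{n\in\N}$ lie in the vector space $Y(E)$, hence so does their difference. The implication $(iii)\Rightarrow(ii)$ is trivial, since a Lipschitz map $Z(X)\to Y(E)$ is in particular well-defined.

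For $(iii)\Rightarrow(iv)$, take finite tuples $(x_1,\ldots,x_N)$ and $(y_1,\ldots,y_N)$ and pad them with zeros. Both padded sequences lie in $c_{00}(X)\subseteq Z(X)$. Because $f(0)=0$, we get $\hat f((x_k)_{k=1}^N)-\hat f((y_k)_{k=1}^N)=(f(x_k)-f(y_k))_{k=1}^N$. Applying the Lipschitz inequality for $\hat f$ gives $(iv)$ with $C=\Lipnorm(\hat f)$, and therefore $\inf\{C\}\le \Lipnorm(\hat f)$.

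The main step is $(iv)\Rightarrow(iii)$ under finite determinacy. Let $(x_n)_{n\in\N},(y_n)_{n\in\N}\in Z(X)$. For each $N$, apply $(iv)$ to the truncations. Then observe that truncation is norm-nonincreasing: because $Z$ is finitely determined, $\|(x_k-y_k)_{k=1}^N\|_{Z(X)}\le \sup_M\|(x_k-y_k)_{k=1}^M\|_{Z(X)}=\|(x_n-y_n)_{n\in\N}\|_{Z(X)}$. Hence
$$\sup_N\|(f(x_k)-f(y_k))_{k=1}^N\|_{Y(E)}\le C\|(x_n-y_n)_{n\in\N}\|_{Z(X)}<\infty.$$
Since $Y$ is finitely determined, $(f(x_n)-f(y_n))_{n\in\N}\in Y(E)$ and its norm equals that supremum. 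This proves $(i)$, hence $(ii)$. Taking $(y_n)_{n\in\N}=0$ gives well-definedness of $\hat f$, and the displayed bound gives $\Lipnorm(\hat f)\le C$. Taking the infimum over admissible $C$ and combining with the reverse inequality from the previous paragraph yields $\Lipnorm(\hat f)=\inf\{C\}$.

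The only subtle point is the monotonicity of the $Z$-norm under truncation. It is not part of the bare definition of a sequence class, but it follows from the identity $\|\cdot\|_{Z(E)}=\sup_N\|(\cdot)_{k=1}^N\|_{Z(E)}$ that holds in the finitely determined case. This is exactly where finite determinacy is used, and it is why $(iv)\Rightarrow(iii)$ is only claimed under that hypothesis.
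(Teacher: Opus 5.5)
Your proposal is correct and follows the paper's proof essentially step by step. You pad with zeros into $c_{00}$ (using $f(0)=0$) for $(iii)\Rightarrow(iv)$, and for $(iv)\Rightarrow(iii)$ you take suprema over truncations and use finite determinacy of $Z$ and $Y$; the two inequalities then give $\Lipnorm(\hat f)=\inf\{C\}$.

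Your only departures are cosmetic: you obtain $(iii)\Rightarrow(ii)$ directly rather than $(iii)\Rightarrow(i)$, and you make explicit the truncation bound $\|(x_k-y_k)_{k=1}^N\|_{Z(X)}\le\|(x_n-y_n)_{n\in\N}\|_{Z(X)}$, which the paper leaves implicit in ``taking the supremum over all such $N$''.
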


\begin{proof}
	$(i) \Rightarrow (ii):$ We begin by showing that $\hat{f}$ is a function. Towards this end, let $(x_n)_{n\in\N},(y_n)_{n\in\N} \in Z(X)$ be such that $\hat{f}((x_n)_{n\in\N}) \neq \hat{f}((y_n)_{n\in\N})$. Then $(f(x_n))_{n\in\N} \neq (f(y_n))_{n\in\N}$, i.e., there is $k\in\N$ for which $f(x_k)\neq f(y_k)$, and since $f$ is a function, we have $x_k \neq y_k$. Hence $(x_n)_{n\in\N} \neq (y_n)_{n\in\N}$. Moreover, we claim that $\hat{f}$ is well-defined. Indeed, let $(x_n)_{n\in\N} \in Z(X)$. Since $Z(X)$ is a vector space, the zero sequence $(0)_{n\in\N} \in Z(X)$. Thus applying our hypothesis we have
	$$
	\hat{f}((x_n)_{n\in\N})=(f(x_n))_{n\in\N}=(f(x_n)-f(0))_{n\in\N} \in Y(E).
	$$
	Consequently, $\hat{f}$ is a well-defined operator.

	$(ii) \Rightarrow (i):$ Assume that $\hat{f}:Z(X) \to Y(E)$ is a well-defined operator. Let $(x_n)_{n\in\N},(y_n)_{n\in\N} \in Z(X)$ and note that
	$$
	(f(x_n)-f(y_n))_{n\in\N} = \hat{f}((x_n)_{n\in\N})-\hat{f}((y_n)_{n\in\N}) \in Y(E).
	$$

	$(iii) \Rightarrow (i):$ Let us suppose that $\hat{f} \in \Lip(Z(X),Y(E))$, and let $(x_n)_{n\in\N},(y_n)_{n\in\N} \in Z(X)$. Then
	$$
	\|(f(x_n)-f(y_n))_{n\in\N}\|_{Y(E)}=\|\hat{f}((x_n)_{n\in\N})-\hat{f}((y_n)_{n\in\N})\|_{Y(E)} \leq \Lipnorm(\hat{f}) \|(x_n)_{n\in\N}-(y_n)_{n\in\N}\|_{Z(X)} < \infty,
	$$
	and we obtain that $(f(x_n)-f(y_n))_{n\in\N} \in Y(E)$.

	$(iii) \Rightarrow (iv):$ Assume that $\hat{f}\in \Lip(Z(X),Y(E))$. Let $N\in\N$ and $(x_1,\ldots,x_N)$, $(y_1,\ldots,y_N)\in X^N$. Let us define the sequences $(u_n)_{n\in\N}$ and $(v_n)_{n\in\N}$ by
	$$
	u_k = \begin{cases}
		x_k \quad &k \leq N,\\
		0 \quad &k > N.
	\end{cases} \qquad
	v_k = \begin{cases}
		y_k \quad &k \leq N,\\
		0 \quad &k > N.
	\end{cases}
	$$

	Since $c_{00}(X) \subseteq Z(X)$ then $(u_n)_{n\in\N},(v_n)_{n\in\N} \in Z(X)$. Therefore,
	$$
	\hat{f}((u_n)_{n\in\N})-\hat{f}((v_n)_{n\in\N}) = (f(u_n)-f(v_n))_{n\in\N} = (f(x_1)-f(y_1), \ldots, f(x_N)-f(y_N), 0, \ldots),
	$$
	and we deduce that
	\begin{align*}
		\|(f(x_k)-f(y_k))_{k=1}^N\|_{Y(E)} &= \|\hat{f}((u_n)_{n\in\N})-\hat{f}((v_n)_{n\in\N})\|_{Y(E)}\\ 
		&\leq \Lipnorm(\hat{f})\|(u_n-v_n)_{n\in\N}\|_{Z(X)}\\ 
		&= \Lipnorm(\hat{f})\|(x_k-y_k)_{k=1}^N\|_{Z(X)}.
	\end{align*}

	Hence we obtain the desired result and $\inf\{C\} \leq \Lipnorm(\hat{f})$, where the infimum is taken over all such positive constants for which the previous inequality is fulfilled.

	$(iv) \Rightarrow (iii):$ Suppose that there exists a constant $C>0$ so that
	$$
	\left\|(f(x_k)-f(y_k))_{k=1}^N\right\|_{Y(E)} \leq C\left\|(x_k-y_k)_{k=1}^N\right\|_{Z(X)}
	$$
	for all $N\in\N$ and all $(x_1,\ldots,x_N)$, $(y_1,\ldots,y_N)\in X^N$. Since $Z$ and $Y$ are finitely determined, just taking the supremum over all such $N\in\N$ we obtain
	$$
	\|\hat{f}((x_n)_{n\in\N})-\hat{f}((y_n)_{n\in\N})\|_{Y(E)} \leq C\|(x_n)_{n\in\N}-(y_n)_{n\in\N}\|_{Z(X)}.
	$$
	Hence $\hat{f} \in \Lip(Z(X),Y(E))$ and $\Lipnorm(\hat{f}) \leq \inf\{C\}$ with the infimum being taken over all such positive constants satisfying that inequality.
\end{proof}

\begin{remark}
	If one compares the above result with its counterpart in the multilinear setting, it could be observed that it is not merely a replica since, by the lack of linear structure, the theorem now obtained is weaker. Specifically, we are going to demonstrate that statement $(i) \Rightarrow (iii)$ in Theorem \ref{Th 1} is not true in general, unlike what happens in the multilinear setting (\cite[Proposition 2.4]{BotCam-17}). Let us consider $X=E=\R$, $Z=\ell_2$ and $Y=\ell_1$, and define the mapping $f:\R \to \R$ by
	$$
	f(x) = \begin{cases}
		x^2 \qquad &|x|\leq 1,\\
		2|x|-1 \qquad &|x| > 1.
	\end{cases}
	$$

	It is not difficult to show that $f \in \Lip(\R)$. We claim that condition $(i)$ in Theorem \ref{Th 1} is satisfied. Indeed, let $(x_n)_{n\in\N},(y_n)_{n\in\N} \in \ell_2(\R)$ and define the set $\Gamma = \{n \in\N: \max\{|x_n|,|y_n|\}>1\}$. Since $(x_n)_{n\in\N},(y_n)_{n\in\N} \in \ell_2(\R)$, we have that $\Gamma$ is finite. Thus
	$$
	\sum_{i=1}^\infty |f(x_i)-f(y_i)| = \sum_{i\in \Gamma} |f(x_i)-f(y_i)| + \sum_{i\notin \Gamma} |f(x_i)-f(y_i)|.
	$$

	Clearly, $\sum_{i\in \Gamma} |f(x_i)-f(y_i)| <\infty$ because $f\in\Lip(\R)$ and the index set is finite. On the other hand, if $i \notin \Gamma$, then $|x_n|,|y_n| \leq 1$, and therefore, by Cauchy--Schwarz inequality,
	\begin{align*}
	\sum_{i\notin \Gamma} |f(x_i)-f(y_i)| &= \sum_{i\notin\Gamma} |x_i^2-y_i^2| = \sum_{i\notin\Gamma}|x_i-y_i||x_i+y_i|\\
	&\leq \left( \sum_{i=1}^{\infty} |x_i-y_i|^2 \right)^{\frac{1}{2}} \left( \sum_{i=1}^{\infty} |x_i+y_i|^2 \right)^{\frac{1}{2}}< \infty.
	\end{align*}

	Thus $(f(x_n)-f(y_n))_{n\in\N} \in \ell_1(\R)$ and condition $(i)$ holds. Nevertheless, for each $N \in \N$, let $(u_n^N)_{n\in\N}$ be the sequence given by
	$$
	u_k^N= 
	\begin{cases}
	1 \quad  & k \leq N,\\
	0 \quad & k > N. 
	\end{cases}
	$$

	It is straightforward that $(u_n^N)_{n\in\N} \in c_{00}(\R) \subseteq \ell_2(\R)$ and then 
	$$
	\|\hat{f}((u_n^N)_{n\in\N})-\hat{f}((0)_{n\in\N})\|_{\ell_1(\R)} = \|(f(u_n^N)-f(0))_{n\in\N}\|_{\ell_1(\R)} = \sum_{k=1}^{N} |f(1)| = N,
	$$

	whereas $\|(u_n^N)_{n\in\N}\|_{\ell_2(\R)} = \left( \sum_{k=1}^{N} |u_k^N|^2 \right)^{\frac{1}{2}} = \sqrt{N}$. Consequently,
	$$
	\frac{\|\hat{f}((u_n^N)_{n\in\N})-\hat{f}((0)_{n\in\N})\|_{\ell_1(\R)}}{\|(u_n^N)_{n\in\N}\|_{\ell_2(\R)}} = \sqrt{N} \xrightarrow{N \to \infty} \infty.
	$$

	Hence $\hat{f} \notin \Lip(\ell_2(\R),\ell_1(\R))$.
\end{remark}

Let us recall by \cite[Definition 2.5]{BotCam-17} that given two sequence classes $Z$ and $Y$, then $Z \prec Y$ if the following conditions are fulfilled:
\begin{itemize}
	\item[$(i)$] $Z(X)$ is a closed subspace of $Y(X)$ which carries the induced norm from $Y(X)$ for all Banach space $X$.
	\item[$(ii)$] Given $(x_n)_{n\in\N} \in Y(X)$, then $(x_n)_{n\in\N} \in Z(X)$ if and only if $\lim_{n,N}\|(x_k)_{k=n}^N\|_{Y(X)}=0$.
\end{itemize}

Just taking into account Theorem \ref{Th 1}, the following result for sequence classes satisfying the previous relation can be shown. Compare it to \cite[Corollary 2.6]{BotCam-17} for the multilinear version, and note the differences with the Lipschitz setting.

\begin{corollary}\label{Cor 2}
	Let $Z_1,Z_2,Y_1,Y_2$ be sequence classes such that $Z_2,Y_2$ are finitely determined with $Z_1 \prec Z_2$ and $Y_1 \prec Y_2$. The following conditions are equivalent for $f\in \Lip(X,E)$:
	\begin{itemize}
		\item[$(i)$] The induced operator $\hat{f} \in \Lip(Z_1(X),Y_1(E))$.
		\item[$(ii)$] The induced operator $\hat{f} \in \Lip(Z_2(X),Y_2(E))$.
	\end{itemize}
	In such a case, $\Lipnorm(\hat{f}:Z_1(X)\to Y_1(E)) = \Lipnorm(\hat{f}:Z_2(X)\to Y_2(E))$.
\end{corollary}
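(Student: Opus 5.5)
The plan is to pass through the finite-sequence inequality $(iv)$ of Theorem~\ref{Th 1}. Since $Z_1\prec Z_2$, the space $Z_1(X)$ carries the norm induced from $Z_2(X)$, and $c_{00}(X)\subseteq Z_1(X)$. Hence for finite sequences
\[
\|(x_k)_{k=1}^N\|_{Z_1(X)}=\|(x_k)_{k=1}^N\|_{Z_2(X)},
\]
and similarly for $Y_1,Y_2$ on $E$. So condition $(iv)$ of Theorem~\ref{Th 1} reads the same for the pair $(Z_1,Y_1)$ as for the pair $(Z_2,Y_2)$, with the same admissible constants $C$.

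$(ii)\Rightarrow(i)$. By Theorem~\ref{Th 1}, $(iii)\Rightarrow(iv)$ gives $(iv)$ for $(Z_2,Y_2)$ with every constant $C\ge \Lipnorm(\hat f:Z_2(X)\to Y_2(E))$ admissible. Let $(x_n),(y_n)\in Z_1(X)\subseteq Z_2(X)$. Then $\hat f((x_n))-\hat f((y_n))\in Y_2(E)$ with norm at most $\Lipnorm(\hat f)\,\|(x_n-y_n)\|_{Z_2(X)}$. We still need this difference to lie in $Y_1(E)$. By definition~$(ii)$ of $Y_1\prec Y_2$, it suffices to show $\lim_{n,N}\|(f(x_k)-f(y_k))_{k=n}^N\|_{Y_2(E)}=0$. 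Applying $(iv)$ to the blocks $k=n,\dots,N$ (after shifting indices) gives
\[
\|(f(x_k)-f(y_k))_{k=n}^N\|_{Y_2(E)}\le C\,\|(x_k-y_k)_{k=n}^N\|_{Z_2(X)}.
\]
The right side tends to $0$ because $(x_n-y_n)\in Z_1(X)$ and $Z_1\prec Z_2$. Here I read $\|(x_k)_{k=n}^N\|$ as the norm of the sequence placed at positions $n,\dots,N$ with zeros elsewhere, so $(iv)$ applies directly to it as a finite sequence. Once the difference lies in $Y_1(E)$, the norms agree, giving $\Lipnorm(\hat f:Z_1\to Y_1)\le \Lipnorm(\hat f:Z_2\to Y_2)$.

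$(i)\Rightarrow(ii)$. By Theorem~\ref{Th 1}, $(iii)\Rightarrow(iv)$ for $(Z_1,Y_1)$ gives $(iv)$ with $C=\Lipnorm(\hat f:Z_1\to Y_1)$. By the norm coincidence on finite sequences, this is exactly $(iv)$ for $(Z_2,Y_2)$. Since $Z_2,Y_2$ are finitely determined, Theorem~\ref{Th 1} $(iv)\Rightarrow(iii)$ gives $\hat f\in\Lip(Z_2(X),Y_2(E))$ with $\Lipnorm\le C$. Combining both directions yields equality of the Lipschitz norms.

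The step needing the most care is the Cauchy-tail argument in $(ii)\Rightarrow(i)$: the Lipschitz estimate must be applied to differences of the two sequences, not to each sequence separately. This is exactly what $(iv)$ provides, since it is phrased for pairs $(x_k),(y_k)$ and bounds by $\|(x_k-y_k)\|$. By contrast, the homogeneity available in the multilinear case is missing here. I would also check that the paper's notation $\|(x_k)_{k=n}^N\|$ means the shifted, zero-padded sequence, so that applying $(iv)$ to it is legitimate.
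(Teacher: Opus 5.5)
Your proof is correct and follows the paper's argument essentially step for step. For $(i)\Rightarrow(ii)$ you go through Theorem~\ref{Th 1} $(iii)\Rightarrow(iv)\Rightarrow(iii)$, using that the $Z_1$/$Z_2$ and $Y_1$/$Y_2$ norms agree on finite sequences, and for $(ii)\Rightarrow(i)$ you use the tail criterion in the definition of $Y_1\prec Y_2$, just as the paper does.

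Your zero-padded reading of $(x_k)_{k=n}^N$ is exactly the one the paper uses: it applies the Lipschitz estimate for $\hat f$ to sequences $u^{n,N},v^{n,N}$ vanishing outside positions $n,\dots,N$, which amounts to taking $x_k=y_k=0$ for $k<n$ in $(iv)$. So you should drop the phrase about shifting indices, since sequence-class norms need not be shift invariant.
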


\begin{proof}
	$(i) \Rightarrow (ii):$ First of all, note that $Z_1(X)$ and $Y_1(E)$ are closed subspaces of $Z_2(X)$ and $Y_2(E)$, respectively, carrying the induced norms. Thus by $(iii) \Rightarrow (iv)$ in Theorem \ref{Th 1} there exists a constant $C>0$ such that, given $N\in\N$ and $(x_1,\ldots,x_N)$, $(y_1,\ldots,y_N) \in X^N$, we have
	$$
	\|(f(x_k)-f(y_k))_{k=1}^N\|_{Y_1(E)} \leq C\|(x_k-y_k)_{k=1}^N\|_{Z_1(X)}.
	$$
	Since $c_{00}(X) \subseteq Z_1(X)$ and $c_{00}(E) \subseteq Y_1(E)$, and these spaces carry the induced norms from $Z_2(X)$ and $Y_2(E)$, respectively, the previous inequality can be rewritten as
	$$
	\|(f(x_k)-f(y_k))_{k=1}^N\|_{Y_2(E)} \leq C\|(x_k-y_k)_{k=1}^N\|_{Z_2(X)}.
	$$
	Since $Z_2$ and $Y_2$ are finitely determined sequence classes we conclude this assertion by $(iv) \Rightarrow (iii)$ in Theorem \ref{Th 1}. Moreover, it follows straightforwardly that $\Lipnorm(\hat{f}:Z_2(X)\to Y_2(E)) \leq \Lipnorm(\hat{f}:Z_1(X)\to Y_1(E))$.

	$(ii) \Rightarrow (i):$ Let us assume that $\hat{f} \in \Lip(Z_2(X),Y_2(E))$ and consider $(x_n)_{n\in\N},(y_n)_{n\in\N} \in Z_1(X)$. Since $Z_1 \prec Z_2$ we have that $(x_n)_{n\in\N},(y_n)_{n\in\N} \in Z_2(X)$ and then $(f(x_n)-f(y_n))_{n\in\N} \in Y_2(E)$. We claim that $(f(x_n)-f(y_n))_{n\in\N} \in Y_1(E)$. Indeed, since $Y_1 \prec Y_2$, it is enough to prove that $\lim_{n,N}\|(f(x_k)-f(y_k))_{k=n}^N\|_{Y_2(E)}=0$. Let $N,n\in\N$ with $n\leq N$, and let us define the sequences $(u_m^{n,N})_{m\in\N},(v_m^{n,N})_{m\in\N}$ as
	$$
	u_k^{n,N} = \begin{cases}
		x_k \quad &n \leq k \leq N,\\
		0 \quad &\text{otherwise}.
	\end{cases} \qquad
	v_k^{n,N} = \begin{cases}
		y_k \quad &n\leq k \leq N,\\
		0 \quad &\text{otherwise}.
	\end{cases}
	$$

	Since $c_{00}(X) \subseteq Z_1(X) \subseteq Z_2(X)$, we can ensure that $(u_m^{n,N})_{m\in\N},(v_m^{n,N})_{m\in\N} \in Z_2(X)$ and
	$$
	\hat{f}((u_m^{n,N})_{m\in\N})-\hat{f}((v_m^{n,N})_{m\in\N}) = (0, \ldots, 0, f(x_n)-f(y_n), \ldots, f(x_N)-f(y_N), 0, \ldots).
	$$

	Therefore,
	\begin{align*}
		\|(f(x_k)-f(y_k))_{k=n}^N\|_{Y_2(E)} &= \|\hat{f}((u_m^{n,N})_{m\in\N})-\hat{f}((v_m^{n,N})_{m\in\N})\|_{Y_2(E)}\\
		&\leq \Lipnorm(\hat{f})\|(u_m^{n,N}-v_m^{n,N})_{m\in\N}\|_{Z_2(X)}\\ 
		&= \Lipnorm(\hat{f})\|(x_k-y_k)_{k=n}^N\|_{Z_2(X)}.
	\end{align*}

	Just taking into account that $(x_n-y_n)_{n\in\N} \in Z_1(X)$ and $Z_1 \prec Z_2$, we have that $\lim_{n,N}\|(x_k-y_k)_{k=n}^N\|_{Z_2(X)} = 0$. Consequently $\lim_{n,N}\|(f(x_k)-f(y_k))_{k=n}^N\|_{Y_2(E)}=0$, and since $Y_1 \prec Y_2$, we can conclude that $(f(x_n)-f(y_n))_{n\in\N} \in Y_1(E)$. Thus $\hat{f}:Z_1(X)\to Y_1(E)$ is well-defined by assertion $(i) \Rightarrow (ii)$ in Theorem \ref{Th 1} and
	\begin{align*}
		\|\hat{f}((x_n)_{n\in\N})-\hat{f}((y_n)_{n\in\N})\|_{Y_1(E)} &= \|\hat{f}((x_n)_{n\in\N})-\hat{f}((y_n)_{n\in\N})\|_{Y_2(E)}\\
		&\leq \Lipnorm(\hat{f})\|(x_n-y_n)_{n\in\N}\|_{Z_2(X)}\\
		&= \Lipnorm(\hat{f})\|(x_n-y_n)_{n\in\N}\|_{Z_1(X)}.
	\end{align*}

	Hence $\hat{f} \in \Lip(Z_1(X),Y_1(E))$ with $\Lipnorm(\hat{f}:Z_1(X)\to Y_1(E)) \leq \Lipnorm(\hat{f}:Z_2(X)\to Y_2(E))$.
\end{proof}

The next subclass of Lipschitz mappings is introduced as a result of the relations established in Theorem \ref{Th 1}.

\begin{definition}\label{Def 3}
	Let $Z$ and $Y$ be sequence classes. A function $f\in\Lip(X,E)$ is called $(Z,Y)$-summing if the induced operator $\hat{f} \in \Lip(Z(X),Y(E))$. The set of all $E$-valued $(Z,Y)$-summing Lipschitz functions defined in $X$ is denoted by $\Pi_{(Z,Y)}^{\Lip}(X,E)$. Moreover, we define
	$$
	\pi_{(Z,Y)}^{\Lipnorm}(f) := \Lipnorm(\hat{f}) \qquad \left(f \in \Pi_{(Z,Y)}^{\Lip}(X,E)\right).
	$$
\end{definition}

We will be particularly interested in sequence classes satisfying an additional condition related to the space of Lipschitz functions, in terms of the definition we have just proposed.

\begin{definition}\label{Def 4}
	A sequence class $Z$ is called Lipschitz stable if for every pair of Banach spaces $X$ and $E$, $(\Pi_{(Z,Z)}^{\Lip}(X,E),\pi_{(Z,Z)}^{\Lipnorm}) = (\Lip(X,E),\Lipnorm)$.
\end{definition}

The above definition tells us that a sequence class $Z$ is Lipschitz stable if for every $f\in\Lip(X,E)$, its associated operator $\hat{f}$ belongs to $\Lip(Z(X),Z(E))$ with $\Lipnorm(f)=\Lipnorm(\hat{f})$.

The following auxiliary results on Lipschitz stable sequence classes will be very useful throughout the paper.

\begin{lemma}\label{Lem 5}
	Let $Z$ be a Lipschitz stable sequence class and let $X$ be a Banach space. Then $\|(0,\ldots,0,x-y,0,\ldots)\|_{Z(X)} = \|x-y\|_X$ for all $x,y \in X$ regardless of the position in which $x-y$ appears in the sequence.
\end{lemma}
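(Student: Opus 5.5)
It suffices to show that $\|z e^{(j)}\|_{Z(X)} = \|z\|_X$ for every $z\in X$ and every position $j\in\N$, where $z e^{(j)}$ denotes the sequence with $z$ in the $j$-th coordinate and zeros elsewhere. Applying this with $z=x-y$ gives the statement. The inequality $\|z\|_X \le \|z e^{(j)}\|_{Z(X)}$ is immediate from the axiom $\|\cdot\|_\infty\le\|\cdot\|_{Z(X)}$. The work is in the reverse inequality, and there I will use Lipschitz stability with cleverly chosen scalar-valued or rank-one Lipschitz maps.

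\emph{Step 1: show $\|z e^{(j)}\|_{Z(X)} \le \|z\|_X$.} Assume $z\neq 0$. By Hahn--Banach, pick $\phi\in X^*$ with $\|\phi\|=1$ and $\phi(z)=\|z\|$.

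Consider first the map $g:\K\to X$ given by $g(t)= t\,z/\|z\|$. It is linear, so $g\in\Lip(\K,X)$ with $\Lipnorm(g)=1$. By Lipschitz stability, $\hat g\in\Lip(Z(\K),Z(X))$ with $\Lipnorm(\hat g)=1$. Apply $\hat g$ to the scalar sequences $\|z\|e^{(j)}$ and $0$. Since $g(0)=0$, this gives
$$\|z e^{(j)}\|_{Z(X)}\le \|\,\|z\|e^{(j)}\|_{Z(\K)}=\|z\|\,\|e^{(j)}\|_{Z(\K)}=\|z\|,$$
using the normalization $\|e^{(j)}\|_{Z(\K)}=1$ from the definition of sequence class.

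\emph{Step 2 (reverse direction, a consistency check).} Taking $f=\phi\in\Lip(X,\K)$ and using stability, we get $\|z\|=\|\,\|z\|e^{(j)}\|_{Z(\K)}\le\|z e^{(j)}\|_{Z(X)}$. This recovers the lower bound and makes the equality fully symmetric within the stability framework.

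The only delicate point is that stability gives $\Lipnorm(\hat g)=\Lipnorm(g)$, not merely finiteness. This is exactly Definition~\ref{Def 4}, so the constant is $1$. The argument is independent of $j$, which settles the claim \emph{regardless of the position}.
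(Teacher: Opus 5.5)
Your proof is correct and follows essentially the same route as the paper. The lower bound comes from $\|\cdot\|_\infty\le\|\cdot\|_{Z(X)}$, and the upper bound comes from applying Lipschitz stability to the linear map $t\mapsto tz$ (suitably scaled) from $\K$ into $X$, together with the normalization $\|e^{(j)}\|_{Z(\K)}=1$. The paper evaluates $\hat g$ at $2e^{(j)}$ and $e^{(j)}$ instead of at $\|z\|e^{(j)}$ and $0$, which makes no difference, and your Step 2 is redundant since the $\ell_\infty$ axiom already gives the lower bound.
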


\begin{proof}
	Let $x,y \in X$ and let us consider the mapping $g_{x-y}: \mathbb{K} \to X$ defined by
	$$
	g_{x-y}(\alpha) = \alpha(x-y) \qquad (\alpha \in \mathbb{K}).
	$$

	It is clear that $g_{x-y}(0)=0$ and
	$$
	\frac{\|g_{x-y}(\alpha)-g_{x-y}(\beta)\|_X}{|\alpha-\beta|} = \frac{\|\alpha(x-y)-\beta(x-y)\|_X}{|\alpha-\beta|} = \frac{\|(\alpha-\beta)(x-y)\|_X}{|\alpha-\beta|} = \|x-y\|_X
	$$
	for all $\alpha,\beta \in \mathbb{K}$ with $\alpha\neq \beta$. Thus $g_{x-y}\in\Lip(\mathbb{K},X)$ with $\Lipnorm(g_{x-y})=\|x-y\|_X$. Fix $j\in\N$ and let $(e_n^{(j)})_{n\in\N}$ be the sequence defined as $0$ if $n\neq j$ and $1$ if $n=j$. Then
	\begin{align*}
		\|x-y\|_X &\leq \|(0,\ldots, 0, x-y^{(j)},0,\ldots)\|_{Z(X)}=\|\hat{g}_{x-y}((2e_n^{(j)})_{n\in\N})-\hat{g}_{x-y}((e_n^{(j)})_{n\in\N})\|_{Z(X)}\\
		&\leq \Lipnorm(\hat{g}_{x-y})\|(e_n^{(j)})_{n\in\N}\|_{Z(\mathbb{K})}=\Lipnorm(\hat{g}_{x-y})=\|x-y\|_X,
	\end{align*}
	where the first inequality is since $Z(X) \subseteq \ell_\infty(X)$ with $\left\|\cdot\right\|_\infty \leq \left\|\cdot\right\|_{Z(X)}$, and the second one is due to $Z$ is Lipschitz stable. Consequently, we obtain the desired result.
\end{proof}

\begin{lemma}\label{Lem 6}
	Let $Z,Y$ be Lipschitz stable sequence classes and let $X,E$ be Banach spaces. Then $\Pi_{(Z,Y)}^{\Lip}(X,E)\subseteq \Lip(X,E)$ with $\Lipnorm(f) \leq \pi_{(Z,Y)}^{\Lipnorm}(f)$ for all $f \in \Pi_{(Z,Y)}^{\Lip}(X,E)$.
	
\end{lemma}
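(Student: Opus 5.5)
Since every $f\in\Pi_{(Z,Y)}^{\Lip}(X,E)$ is by Definition~\ref{Def 3} already an element of $\Lip(X,E)$, the inclusion is immediate. The content of the statement is the norm inequality $\Lipnorm(f)\le\pi_{(Z,Y)}^{\Lipnorm}(f)=\Lipnorm(\hat f)$. The plan is to test $\hat f$ on sequences supported in a single coordinate and then use Lemma~\ref{Lem 5} to identify the norms of such sequences with the norms of their only nonzero entry.

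Fix $x,y\in X$ with $x\neq y$ and a position $j\in\N$, say $j=1$. Consider the sequences $u=(x,0,0,\ldots)$ and $v=(y,0,0,\ldots)$, which belong to $c_{00}(X)\subseteq Z(X)$. Since $f(0)=0$, we get
$$\hat f(u)-\hat f(v)=(f(x)-f(y),0,0,\ldots)\in Y(E).$$
Using that $\hat f$ is Lipschitz,
$$\|(f(x)-f(y),0,\ldots)\|_{Y(E)}\le \Lipnorm(\hat f)\,\|(x-y,0,\ldots)\|_{Z(X)}.$$

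Next we evaluate both sides. On the right, Lemma~\ref{Lem 5} applied to the Lipschitz stable class $Z$ in the space $X$ gives $\|(x-y,0,\ldots)\|_{Z(X)}=\|x-y\|_X$. On the left, we can either apply Lemma~\ref{Lem 5} to $Y$ in the space $E$ with the points $f(x),f(y)$, which gives equality, or simply use $\|\cdot\|_\infty\le\|\cdot\|_{Y(E)}$, which yields the lower bound $\|f(x)-f(y)\|_E$. The second route needs no stability of $Y$ at all. Combining the two evaluations,
$$\|f(x)-f(y)\|_E\le\Lipnorm(\hat f)\,\|x-y\|_X.$$
Dividing by $\|x-y\|_X$ and taking the supremum over all $x\neq y$ gives $\Lipnorm(f)\le\pi_{(Z,Y)}^{\Lipnorm}(f)$.

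There is no genuine obstacle here. The only point requiring care is to justify that the $Z(X)$-norm of a one-entry sequence equals the norm of that entry. This is exactly what Lemma~\ref{Lem 5} provides, and it is the step where the Lipschitz stability of $Z$ is really used.
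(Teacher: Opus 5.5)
Your proof is correct and follows the paper's argument: you test $\hat f$ on $(x,0,0,\ldots)$ and $(y,0,0,\ldots)$, and use Lemma~\ref{Lem 5} to identify $\|(x-y,0,\ldots)\|_{Z(X)}$ with $\|x-y\|_X$. The paper also applies Lemma~\ref{Lem 5} to $Y$ to get equality on the left, but your observation that $\|\cdot\|_\infty\le\|\cdot\|_{Y(E)}$ already suffices there is valid, and it shows that only the Lipschitz stability of $Z$ is actually needed for this inequality.
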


\begin{proof}
	Let $f \in \Pi_{(Z,Y)}^{\Lip}(X,E)$ and let $x,y\in X$ with $x\neq y$. Then, by Lemma \ref{Lem 5}:
	\begin{align*}
		\|f(x)-f(y)\|_E &= \|(f(x)-f(y),0,0,\ldots)\|_{Y(E)} = 
		\|\hat{f}((x,0,0,\ldots))-\hat{f}((y,0,0,\ldots))\|_{Y(E)}\\
		&\leq \Lipnorm(\hat{f})\|(x-y,0,0,\ldots)\|_{Z(X)} = \pi_{(Z,Y)}^{\Lipnorm}(f)\|x-y\|_X.
	\end{align*}
	Hence we obtain that $\Lipnorm(f) \leq \pi_{(Z,Y)}^{\Lipnorm}(f)$.
\end{proof}

Next, we proceed to study the Lipschitz stability of some of the sequence classes presented at the very beginning of this work.

\begin{proposition}\label{Prop 7}
	Let $p \in [1,\infty]$. The sequence class $\ell_p$ is Lipschitz stable.
\end{proposition}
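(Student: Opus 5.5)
We have to show that for every $f\in\Lip(X,E)$ the induced map $\hat f$ lies in $\Lip(\ell_p(X),\ell_p(E))$ and that $\Lipnorm(\hat f)=\Lipnorm(f)$. Since $\ell_p$ is finitely determined for all $p\in[1,\infty]$, the plan is to reduce everything to finite sequences via Theorem \ref{Th 1}, $(iv)\Rightarrow(iii)$, which also identifies $\Lipnorm(\hat f)$ with the infimum of the admissible constants $C$.

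For the upper bound, fix $N\in\N$ and $(x_1,\ldots,x_N),(y_1,\ldots,y_N)\in X^N$. For $p<\infty$, termwise use of $\|f(x_k)-f(y_k)\|_E\le \Lipnorm(f)\|x_k-y_k\|_X$ gives
$$
\Big(\sum_{k=1}^N\|f(x_k)-f(y_k)\|_E^p\Big)^{1/p}\le \Lipnorm(f)\Big(\sum_{k=1}^N\|x_k-y_k\|_X^p\Big)^{1/p}.
$$
For $p=\infty$ the same estimate holds after taking maxima over $k$ instead of sums. Hence condition $(iv)$ of Theorem \ref{Th 1} holds with $C=\Lipnorm(f)$. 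Therefore $\hat f\in\Lip(\ell_p(X),\ell_p(E))$ with $\Lipnorm(\hat f)\le\Lipnorm(f)$, so $f\in\Pi_{(\ell_p,\ell_p)}^{\Lip}(X,E)$.

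For the reverse inequality, I would argue directly rather than through Lemma \ref{Lem 6}, since that lemma assumes Lipschitz stability, which is what we are proving. For $x,y\in X$, the sequences $(x,0,0,\ldots)$ and $(y,0,0,\ldots)$ give
$$
\|f(x)-f(y)\|_E=\|\hat f((x,0,\ldots))-\hat f((y,0,\ldots))\|_{\ell_p(E)}\le \Lipnorm(\hat f)\|x-y\|_X,
$$
because the $\ell_p$-norm of a sequence with a single nonzero entry equals the norm of that entry. This yields $\Lipnorm(f)\le\Lipnorm(\hat f)$. Combining both parts gives equality of the spaces and of the norms, which is Definition \ref{Def 4}.

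There is no real obstacle here. The only care needed is to treat $p=\infty$ separately as a supremum, and to avoid circularly invoking Lemma \ref{Lem 5} or Lemma \ref{Lem 6}.
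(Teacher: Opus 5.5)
Your proof is correct and uses essentially the same argument as the paper. The termwise estimate $\|f(x_k)-f(y_k)\|_E\le\Lipnorm(f)\|x_k-y_k\|_X$ gives $\Lipnorm(\hat f)\le\Lipnorm(f)$, and the single-entry sequences $(x,0,0,\ldots)$, $(y,0,0,\ldots)$ give the reverse inequality. The only cosmetic difference is that you pass to finite sequences and invoke Theorem \ref{Th 1}, $(iv)\Rightarrow(iii)$, using the finite determinedness of $\ell_p$, whereas the paper applies the termwise bound directly to infinite sequences in $\ell_p(X)$.
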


\begin{proof}
	We focus on the case $1\leq p <\infty$ since the case $p=\infty$ follows similarly. Let $f\in\Lip(X,E)$ and $(x_n)_{n\in\N},(y_n)_{n\in\N} \in \ell_p(X)$. Then for each $k\in\N$ we have $\|f(x_k)-f(y_k)\|_E \leq \Lipnorm(f)\|x_k-y_k\|_X$. Therefore by Minkowski's inequality
	$$
	\sum_{i=1}^\infty \|f(x_i)-f(y_i)\|_E^p \leq \sum_{i=1}^\infty \Lipnorm(f)^p \|x_i-y_i\|_X^p \leq \Lipnorm(f)^p \left[ \left( \sum_{i=1}^\infty \|x_i\|_X^p \right)^{\frac{1}{p}} + \left( \sum_{i=1}^\infty \|y_i\|_X^p \right)^{\frac{1}{p}} \right]^p < \infty.
	$$

	Hence $(f(x_n)-f(y_n))_{n\in\N} \in \ell_p(E)$. Moreover
	$$
	\|\hat{f}((x_n)_{n\in\N})-\hat{f}((y_n)_{n\in\N})\|_{\ell_p(E)} = \|(f(x_n)-f(y_n))_{n\in\N}\|_{\ell_p(E)}\leq \Lipnorm(f)\|(x_n-y_n)_{n\in\N}\|_{\ell_p(X)}.
	$$

	Thus $\hat{f}\in \Lip(\ell_p(X),\ell_p(E))$ with $\Lipnorm(\hat{f})\leq \Lipnorm(f)$. For the remaining inequality, let $x,y\in X$ with $x\neq y$ and note that $\|(x-y,0,0,\ldots)\|_{\ell_p(X)} = \|x-y\|_X$. Since $\hat{f}((x,0,0,\ldots))-\hat{f}((y,0,0,\ldots))= (f(x)-f(y),0,0,\ldots)$, we have $\|\hat{f}((x,0,0,\ldots))-\hat{f}((y,0,0,\ldots))\|_{\ell_p(E)}= \|f(x)-f(y)\|_E$. Then
	$$
	\Lipnorm(\hat{f}) \geq \frac{\|\hat{f}((x,0,0,\ldots))-\hat{f}((y,0,0,\ldots))\|_{\ell_p(E)}}{\|(x-y,0,0,\ldots)\|_{\ell_p(X)}} = \frac{\|f(x)-f(y)\|_E}{\|x-y\|_X}.
	$$

	Just taking the supremum over all $x,y \in X$ with $x\neq y$ we conclude that $\Lipnorm(\hat{f}) \geq \Lipnorm(f)$.
\end{proof}

To our surprise, the Lipschitz framework is even more rigid than the multilinear one with respect to the stability of the sequence classes $\Rad$ and $\RAD$ (compare it to \cite[Theorem 4.5]{BotCam-17}).

\begin{proposition}
	The sequence classes $\mathrm{RAD}$ and $\mathrm{Rad}$ are not Lipschitz stable.
\end{proposition}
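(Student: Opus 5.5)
We will exhibit one explicit scalar counterexample that works for both classes at once. Take $X=E=\R$ and a Lipschitz map $f\in\Lip(\R)$. In $\R$ the Rademacher norm is the $\ell_2$ norm, since by orthonormality of the Rademacher functions
$$
\Big\|(a_k)_{k=1}^N\Big\|_{\Rad(\R)}=\Big(\int_0^1\Big|\sum_{k=1}^N r_k(t)a_k\Big|^2dt\Big)^{1/2}=\Big(\sum_{k=1}^N|a_k|^2\Big)^{1/2}.
$$
Hence $\Rad(\R)=\RAD(\R)=\ell_2(\R)$ isometrically. Because the scalar $f$ is Lipschitz coordinatewise, Proposition \ref{Prop 7} (the $p=2$ case) shows that $\hat f$ is Lipschitz with the right constant. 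So the scalar case gives no counterexample, and we must use a genuinely vector-valued space.

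The plan is to choose $X=\K$ and $E=\ell_1$ (or $c_0$), and to let $f:\K\to E$ be a nonlinear Lipschitz curve whose increments spread over different coordinates. The natural candidate is a piecewise-linear curve of the form $f(t)=\sum_j \varphi_j(t)e_j$. Here the $\varphi_j$ are tent functions with disjoint supports on intervals $[j,j+1]$ and slopes $\pm1$, so that $\Lipnorm(f)\le 1$ in $\ell_1$; with disjoint supports the pointwise triangle estimate is fine. Now take points $x_k=k+\tfrac12$ and $y_k=k$ for $k=1,\dots,N$. The differences $x_k-y_k$ equal $\tfrac12$, so
$$
\|(x_k-y_k)_{k=1}^N\|_{\Rad(\K)}=\tfrac12\sqrt N .
$$
On the other hand, $f(x_k)-f(y_k)=\tfrac12 e_k$ are disjointly supported vectors in $\ell_1$. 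Since $\ell_1$ has cotype $2$ but not better type, the sum satisfies
$$
\Big\|\sum_k r_k(t)\tfrac12 e_k\Big\|_{\ell_1}=\tfrac N2
$$
for every $t$, so the Rad norm is $N/2$. The ratio is therefore $\sqrt N\to\infty$. This violates condition $(iv)$ of Theorem \ref{Th 1}, so by $(iii)\Rightarrow(iv)$ the map $\hat f$ is not Lipschitz from $\RAD(\K)$ into $\RAD(\ell_1)$, nor from $\Rad(\K)$ into $\Rad(\ell_1)$. Hence $f\in\Lip(\K,\ell_1)\setminus\Pi^{\Lip}_{(Z,Z)}$ for $Z=\Rad,\RAD$.

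The key check, and the one step that needs care, is the Lipschitz estimate for $f$ in $\ell_1$. For $s<t$ the difference $f(t)-f(s)$ involves the tents on the intervals between $s$ and $t$. Each tent contributes at most twice the length of the overlap of its interval with $[s,t]$, so $\|f(t)-f(s)\|_1\le 2|t-s|$, which suffices. We also need $f(0)=0$, which we arrange by starting the tents at $j\ge1$.

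If we wanted a target with nontrivial type, the same scheme would work in $c_0$ (Rad norm of $\sum r_k e_k$ equal to $1$, which goes the wrong way) or in $\ell_p$ with $p<2$. With $\ell_1$, however, the argument is the cleanest.

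The intuition behind the construction is that linear maps preserve Rad structure, whereas a nonlinear map can send the "scalar" orthogonal directions to directions that add up like $\ell_1$. This is exactly why the Lipschitz setting is more rigid than the multilinear one.
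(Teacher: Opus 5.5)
Your proof is correct and takes a genuinely different route from the paper. The paper works on the domain side. It takes the scalar map $h=\|\cdot\|_\infty$ on $c_0(\R)$. The unit vector basis has $\RAD(c_0(\R))$-norm $1$, while $\hat h((e_n)_{n\in\N})=(1,1,\ldots)\notin\ell_2(\R)=\RAD(\R)$, so $\hat h$ is not even well defined. For $\Rad$ the paper needs a second element, $(e_k/\sqrt{k})_{k\in\N}$, placed in $\Rad(c_0(\R))$ through the closure of $c_{00}$. You instead put the obstruction in the target. Your tent curve $f:\K\to\ell_1$ sends scalar increments $\tfrac12$, which add in $\ell_2$-fashion in $\Rad(\K)$, to disjointly supported vectors $\tfrac12 e_k$, which add in $\ell_1$-fashion. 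You then test only finite sequences through $(iii)\Rightarrow(iv)$ of Theorem \ref{Th 1}. Since $\Rad$ and $\RAD$ agree on $c_{00}$, this handles both classes in one stroke, with no closure argument.

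The price is a different kind of failure. Your $f$ vanishes on $(-\infty,1]$ and $\ell_2$-sequences are eventually small, so your $\hat f$ does map $\Rad(\K)=\RAD(\K)=\ell_2(\K)$ into $c_{00}(\ell_1)$. It fails only the Lipschitz estimate, making it another instance of $(i)\not\Rightarrow(iii)$ in Theorem \ref{Th 1}. The paper's example shows the stronger failure of well-definedness, condition $(ii)$. The two examples complement each other. The paper's exploits that $c_0$ lacks finite cotype with the best possible target $\R$. Yours shows that even the best possible domain $\K$ fails once the target lacks nontrivial type.

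Minor points:
\begin{itemize}
\item Your curve actually satisfies $\Lipnorm(f)\le 1$, since each tent changes by at most the length of the overlap of its support with $[s,t]$. The factor $2$ is unnecessary, and your two stated bounds are inconsistent.
\item For $\K=\C$, compose with $\mathrm{Re}$.
\item Drop the aside about $c_0$ as a target ``with nontrivial type''. The space $c_0$ has no nontrivial type, and your own computation shows that no blow-up occurs there. The $\ell_p$, $1<p<2$, variant does work, with ratio $N^{1/p-1/2}$.
\end{itemize}
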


\begin{proof}
	Let $h: c_0(\R) \to \R$ be defined by
	$$
	h((x_n)_{n\in\N})=\|(x_n)_{n\in\N}\|_\infty \qquad ((x_n)_{n\in\N}\in c_0(\R)).
	$$

	Clearly $h((0)_{n\in\N})=0$, and for every $(x_n)_{n\in\N},(y_n)_{n\in\N} \in c_0(\R)$ we have
	$$
	|h((x_n)_{n\in\N})-h((y_n)_{n\in\N})| = \left|\|(x_n)_{n\in\N}\|_\infty - \|(y_n)_{n\in\N}\|_\infty\right| \leq \|(x_n)_{n\in\N}-(y_n)_{n\in\N}\|_\infty.
	$$

	Thus $h \in\Lip(c_0(\R))$ with $\Lipnorm(h) \leq 1$. We claim that $\RAD$ is not Lipschitz stable. Indeed, let $(e_n)_{n\in\N}$ be the canonical basis of $c_0(\R)$. For every $N\in\N$ and $t \in [0,1]$ we have that $\left\| \sum_{k=1}^N r_k(t)(e_n^{(k)})_{n\in\N} \right\|_\infty =1$. Therefore
	$$
	\|(e_n)_{n\in\N}\|_{\RAD(c_0(\R))} = \sup_{N\in\N}\left( \int_0^1 \left\| \sum_{k=1}^N r_k(t)(e_n^{(k)})_{n\in\N} \right\|_\infty^2 dt \right)^{\frac{1}{2}}=1,
	$$
	so $(e_n)_{n\in\N} \in \RAD(c_0(\R))$. On the other hand, note that
	$$
	\hat{h}((e_n)_{n\in\N}) = (h((e_n^{(1)})_{n\in\N}),h((e_n^{(2)})_{n\in\N}),\ldots) = (1,1,\ldots).
	$$

	Now, for every scalar sequence $(a_n)_{n\in\N}$ we have
	$$
	\|(a_n)_{n\in\N}\|_{\RAD(\R)} = \sup_{N\in\N}\left( \int_0^1 \left| \sum_{k=1}^N a_k r_k(t) \right|^2 dt \right)^{\frac{1}{2}} = \sup_{N\in\N}\left( \sum_{k=1}^N |a_k|^2 \right)^{\frac{1}{2}}
	$$
	since the Rademacher mappings are orthonormal in $L_2([0,1])$. Hence  $\RAD(\R) = \ell_2(\R)$ isometrically. Taking into account that $c_{00}(\R)$ is dense in $\ell_2(\R)$, we also have $\Rad(\R) = \ell_2(\R)$ isometrically (see \cite[p. 234]{DieJarTon-95}). In particular, $\hat{h}((e_n)_{n\in\N}) \notin \RAD(\R)$. Thus $\hat{h}:\RAD(c_0(\R)) \to \RAD(\R)$ is not well-defined, and then $\RAD$ is not Lipschitz stable.

	Let us now prove that $\Rad$ is not Lipschitz stable either. Consider the sequence $(u_n)_{n\in\N}$ given by $u_k = \left(\dfrac{e_n^{(k)}}{\sqrt{k}}\right)_{n\in\N}$ for all $k\in\N$. For each $m\in\N$, let $(u_n^m)_{n\in\N}$ be the sequence defined as
	$$
	u_k^m = \begin{cases}
	u_k \quad & k\leq m,\\
	0 \quad & k > m.
	\end{cases}
	$$

	Clearly $(u_n^m)_{n\in\N} \in c_{00}(c_0(\R))$ and
	\begin{align*}
	\|(u_n)_{n\in\N}-(u_n^m)_{n\in\N}\|_{\RAD(c_0(\R))} &= \sup_{N \geq m+1} \left(\int_0^1 \left\| \sum_{k=m+1}^N \frac{r_k(t)}{\sqrt{k}}(e_n^{(k)})_{n\in\N} \right\|_\infty^2 dt \right)^{\frac{1}{2}}\\
	&= \sup_{N \geq m+1} \left( \int_0^1 \left( \max_{m+1 \leq k \leq N} \frac{1}{\sqrt{k}} \right)^2 dt \right)^{\frac{1}{2}} = \frac{1}{\sqrt{m+1}} \xrightarrow{m\to\infty} 0.
	\end{align*}

	Since $\Rad(c_0(\R))= \overline{c_{00}(c_0(\R))}^{\left\|\cdot\right\|_{\RAD(c_0(\R))}}$ it follows that $(u_n)_{n\in\N} \in \Rad(c_0(\R))$. However 
	$$
	\hat{h}((u_n)_{n\in\N}) = \left( h((e_n^{(1)})_{n\in\N}), h\left(\left(e_n^{(2)}/\sqrt{2}\right)_{n\in\N}\right),\ldots \right) = \left( \frac{1}{\sqrt{n}}\right)_{n\in\N},
	$$
	and this sequence does not belong to $\ell_2(\R)=\Rad(\R)$. Consequently $\hat{h}:\Rad(c_0(\R)) \to \Rad(\R)$ is not well-defined, and we conclude that $\Rad$ is not Lipschitz stable.
\end{proof}

With respect to the sequence class $\ell_p^w$, the exceptional case $p=1$, which still survives in the multilinear setting (see \cite[Theorem 4.3]{BotCam-17}), disappears completely in the Lipschitz context.

\begin{proposition}\label{Prop new}
	For every $p \in [1,\infty)$, the sequence class $\ell_p^w$ is not Lipschitz stable.
\end{proposition}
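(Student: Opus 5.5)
Reuse the counterexample from the preceding proposition: the norm map $h\colon c_0(\R)\to\R$, $h((x_n)_{n\in\N})=\|(x_n)_{n\in\N}\|_\infty$, which lies in $\Lip(c_0(\R))$ with $\Lipnorm(h)\le 1$. Fix $p\in[1,\infty)$. We will show that $\hat{h}$ does not map $\ell_p^w(c_0(\R))$ into $\ell_p^w(\R)$. Then condition $(ii)$ of Theorem \ref{Th 1} fails, hence so does $(iii)$, and $\ell_p^w$ is not Lipschitz stable.

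First, describe the scalar target. For $\K=\R$ the dual unit ball consists of multiplication by scalars $|\lambda|\le 1$. Hence $\ell_p^w(\R)=\ell_p(\R)$ isometrically, and any sequence in $\ell_p^w(\R)$ must be $p$-summable.

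Next, pick a weakly $p$-summable sequence in $c_0(\R)$ of norm-one vectors. For $p>1$ take the canonical basis $(e_k)_{k\in\N}$. For $\phi\in B_{c_0(\R)^*}=B_{\ell_1(\R)}$ we have $\sum_k|\phi(e_k)|^p=\sum_k|\phi_k|^p\le\left(\sum_k|\phi_k|\right)^p\le 1$. So $(e_k)_{k\in\N}\in\ell_p^w(c_0(\R))$, while $\hat h((e_k)_{k\in\N})=(1,1,\ldots)\notin\ell_p(\R)$.

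The case $p=1$ is the main obstacle, since $(e_k)_{k\in\N}$ is not weakly $1$-summable in $c_0(\R)$. Here use instead the summing basis differences, i.e.\ the unit vector basis viewed in $\ell_\infty$-type fashion. The cleanest choice is the sequence $x_k=e_k$ regarded inside $c_0$ with a weight that keeps $\sup_{\phi}\sum|\phi(x_k)|$ bounded but $\sum\|x_k\|=\infty$. Take $x_k=e_k/k$? Then $\sum_k|\phi_k|/k\le\|\phi\|_1$, so $(x_k)\in\ell_1^w(c_0(\R))$. On the other hand $\hat h((x_k))=(1/k)_{k\in\N}\notin\ell_1(\R)$, which settles $p=1$.

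In fact the same weighted choice works uniformly for all $p$: take $x_k=k^{-1/p}e_k$. For $\phi\in B_{\ell_1(\R)}$,
$$\sum_k|\phi(x_k)|^p=\sum_k\frac{|\phi_k|^p}{k}\le\sum_k|\phi_k|^p\le 1,$$
whereas $\hat h((x_k)_{k\in\N})=(k^{-1/p})_{k\in\N}$ has $\sum_k k^{-1}=\infty$, so it is not in $\ell_p(\R)=\ell_p^w(\R)$. Thus $\hat h\colon\ell_p^w(c_0(\R))\to\ell_p^w(\R)$ is not well defined, and $\ell_p^w$ is not Lipschitz stable for any $p\in[1,\infty)$.

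The only routine checks are the two facts used above: $c_0(\R)^*\cong\ell_1(\R)$ with $\phi(e_k)=\phi_k$, and the identification $\ell_p^w(\R)=\ell_p(\R)$.
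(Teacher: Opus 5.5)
Your proof is correct and is essentially the paper's argument: the same norm map $h$ on $c_0(\R)$, the identification $\ell_p^w(\R)=\ell_p(\R)$, and a weakly $p$-summable sequence built from the canonical basis whose image under $\hat h$ is not $p$-summable. The one mistake is your claim that $(e_k)_{k\in\N}$ is not weakly $1$-summable in $c_0(\R)$. Your own estimate at $p=1$ gives $\sup_{\phi\in B_{\ell_1(\R)}}\sum_k|\phi_k|=1$, so the paper simply uses the unweighted basis for every $p\in[1,\infty)$, and your weights $k^{-1/p}$ are unnecessary (though harmless; they even place the sequence in the closure of $c_{00}(c_0(\R))$ in $\ell_p^w(c_0(\R))$).
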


\begin{proof}
	Let $h: c_0(\R) \to \R$ be defined by
	$$
	h((x_n)_{n\in\N}) = \|(x_n)_{n\in\N}\|_\infty \qquad ((x_n)_{n\in\N}\in c_0(\R)).
	$$

	According to the previous proof, it is straightforward that $h\in\Lip(c_0(\R))$. Let $(e_n)_{n\in\N}$ be the canonical basis of $c_0(\R)$. Since $c_0(\R)^* = \ell_1(\R)$, then for every $\phi = (\phi_n)_{n\in\N} \in B_{\ell_1(\R)}$ we have
	$$
	\left( \sum_{i=1}^\infty |\phi((e^{(i)}_n)_{n\in\N})|^p \right)^{\frac{1}{p}} = \left( \sum_{i=1}^{\infty} |\phi_i|^p \right)^{\frac{1}{p}} \leq \sum_{i=1}^{\infty} |\phi_i| = \|(\phi_n)_{n\in\N}\|_{\ell_1(\R)} \leq 1.
	$$

	Hence $(e_n)_{n\in\N} \in \ell_p^w(c_0(\R))$. However,
	\begin{align*}
	\hat{h}((e_n)_{n\in\N}) &= (h((e^{(1)}_n)_{n\in\N}),h((e^{(2)}_n)_{n\in\N}),\ldots)\\ 
	&= (\|(e_n^{(1)})_{n\in\N}\|_\infty,\|(e_n^{(2)})_{n\in\N}\|_\infty, \ldots)\\
	&= (1,1,\ldots) \notin \ell_p(\R) = \ell_p^w(\R).
	\end{align*}

	Therefore $\hat{h}: \ell_p^w(c_0(\R)) \to \ell_p^w(\R)$ is not well-defined, and so the sequence class $\ell_p^w$ is not Lipschitz stable.
\end{proof}

The theory of operator ideals was initially developed by Pietsch in \cite{Pie-80} and subsequently extended to the Lipschitz setting by Achour et al. in \cite{AchRueSanYah-16}. Let us recall that a Banach Lipschitz ideal $\A_{\Lip}$ is a subclass of the class $\Lip$ endowed with a map $\left\|\cdot\right\|_{\A_{\Lip}}: \A_{\Lip} \to \mathbb{R}_0^+$ so that for every pair of Banach spaces $(X,E)$, the components $\A_{\Lip}(X,E) := \Lip(X,E)\cap \A_{\Lip}$ verify the following properties:
\begin{itemize}
	\item[$(i)$] $(\A_{\Lip}(X,E),\left\|\cdot\right\|_{\A_{\Lip}})$ is a Banach space and $\Lipnorm(f) \leq \|f\|_{\A_{\Lip}}$ for all $f \in \A_{\Lip}(X,E)$.
	\item[$(ii)$] Given $h \in \Lip(X)$ and $y \in E$, the mapping $h \cdot y: X \to E$ defined by $x \mapsto h(x)y$ is in $\A_{\Lip}(X,E)$ and $\|h \cdot y\|_{\A_{\Lip}} = \Lipnorm(h)\|y\|$.
	\item[$(iii)$] The Lipschitz ideal property: Given two Banach spaces $X_0$ and $E_0$, $T \in \L(E,E_0)$, $f \in \A_{\Lip}(X,E)$ and $h \in \Lip(X_0,X)$, then $T\circ f\circ h \in \A_{\Lip}(X_0,E_0)$ with $\|T\circ f\circ h\|_{\A_{\Lip}}\leq \|T\|\|f\|_{\A_{\Lip}}\Lipnorm(h)$.
\end{itemize}

We now prove that the class of $(Z,Y)$-summing Lipschitz functions is a Banach Lipschitz ideal.

\begin{theorem}\label{Th 8}
	Let $Z$ and $Y$ be Lipschitz stable sequence classes such that $Z(\K) \subseteq Y(\K)$ with $\left\|\cdot\right\|_{Y(\K)} \leq \left\|\cdot\right\|_{Z(\K)}$. Then $\left[\Pi_{(Z,Y)}^{\Lip},\pi_{(Z,Y)}^{\Lipnorm}\right]$ is a Banach Lipschitz ideal.
\end{theorem}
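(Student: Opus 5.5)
We verify the three axioms of a Banach Lipschitz ideal in order, using the identification $f\mapsto\hat{f}$ of $\Pi_{(Z,Y)}^{\Lip}(X,E)$ with a subset of $\Lip(Z(X),Y(E))$.

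\emph{Axiom $(i)$.} First, $\Pi_{(Z,Y)}^{\Lip}(X,E)$ is a vector subspace of $\Lip(X,E)$, since $\widehat{\lambda f+g}=\lambda\hat{f}+\hat{g}$ coordinatewise. Thus $\pi_{(Z,Y)}^{\Lipnorm}(f)=\Lipnorm(\hat{f})$ is a seminorm. By Lemma~\ref{Lem 6}, $\Lipnorm(f)\le \pi_{(Z,Y)}^{\Lipnorm}(f)$, so it is in fact a norm.

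For completeness, take a $\pi_{(Z,Y)}^{\Lipnorm}$-Cauchy sequence $(f_j)$. It is $\Lipnorm$-Cauchy, so $f_j\to f$ in $\Lip(X,E)$, and in particular $f_j\to f$ pointwise. Also $(\hat{f}_j)$ is Cauchy in the Banach space $\Lip(Z(X),Y(E))$, so $\hat{f}_j\to G$ there. It remains to show $G=\hat{f}$. Since $\|\cdot\|_\infty\le\|\cdot\|_{Y(E)}$, convergence in $Y(E)$ implies coordinatewise convergence. Hence, for $(x_n)\in Z(X)$, the $k$-th coordinate of $G((x_n))$ is $\lim_j f_j(x_k)=f(x_k)$. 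Therefore $\hat{f}=G\in\Lip(Z(X),Y(E))$, and $\pi(f_j-f)=\Lipnorm(\hat{f}_j-G)\to0$.

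\emph{Axiom $(ii)$.} Let $h\in\Lip(X)$ and $y\in E$. Write $\hat{h}:Z(X)\to Z(\K)$; by Lipschitz stability of $Z$, $\Lipnorm(\hat{h})=\Lipnorm(h)$. For $(x_n),(z_n)\in Z(X)$, set $a_n=h(x_n)-h(z_n)$, so $(a_n)\in Z(\K)\subseteq Y(\K)$ with $\|(a_n)\|_{Y(\K)}\le\Lipnorm(h)\|(x_n-z_n)\|_{Z(X)}$. We then need $(a_ny)_n\in Y(E)$ with $\|(a_ny)\|_{Y(E)}\le\|(a_n)\|_{Y(\K)}\|y\|$.

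To obtain this, use the linear map $g_y:\K\to E$, $\alpha\mapsto\alpha y$, as in Lemma~\ref{Lem 5}. It is Lipschitz with $\Lipnorm(g_y)=\|y\|$. By Lipschitz stability of $Y$, $\hat{g}_y:Y(\K)\to Y(E)$ is Lipschitz with constant $\|y\|$. Applying it to $(a_n)$ and $0$ gives exactly $(a_ny)\in Y(E)$ with the required bound. Hence $\pi(h\cdot y)\le\Lipnorm(h)\|y\|$. The reverse inequality follows from Lemma~\ref{Lem 6} together with $\Lipnorm(h\cdot y)=\Lipnorm(h)\|y\|$.

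\emph{Axiom $(iii)$.} Let $T\in\L(E,E_0)$ and $h\in\Lip(X_0,X)$. Observe that $\widehat{T\circ f\circ h}=\hat{T}\circ\hat{f}\circ\hat{h}$. Here $\hat{h}:Z(X_0)\to Z(X)$ has Lipschitz constant $\Lipnorm(h)$ by stability of $Z$. Likewise $T$, being linear and bounded, is Lipschitz with $\Lipnorm(T)=\|T\|$, so stability of $Y$ gives $\Lipnorm(\hat{T}:Y(E)\to Y(E_0))=\|T\|$. Composing Lipschitz maps yields $\pi(T\circ f\circ h)\le\|T\|\,\pi(f)\,\Lipnorm(h)$.

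The main obstacle is the completeness step, in particular identifying the limit $G$ with $\hat{f}$. This is handled by the continuous inclusion $Y(E)\hookrightarrow\ell_\infty(E)$, which converts convergence in $Y(E)$ into coordinatewise convergence. Everything else reduces to applying Lipschitz stability to suitable auxiliary maps.
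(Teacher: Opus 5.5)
Your proposal is correct and follows essentially the same approach as the paper, axiom by axiom. The differences are refinements only. In the completeness step you identify the limit $G=\hat f$ coordinatewise via $\|\cdot\|_\infty\le\|\cdot\|_{Y(E)}$ directly, whereas the paper routes the same idea through the Lipschitz stability of $\ell_\infty$. In axiom $(ii)$ you derive $\|(a_ny)_n\|_{Y(E)}\le\|y\|\,\|(a_n)_n\|_{Y(\K)}$ from the Lipschitz stability of $Y$ applied to $\alpha\mapsto\alpha y$, which justifies a step the paper simply asserts as an equality.
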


\begin{proof}
	$(i):$ Firstly, we show that $\Pi_{(Z,Y)}^{\Lip}(X,E)$ is a linear subspace of $\Lip(X,E)$. Towards this end, let $f,g \in \Pi_{(Z,Y)}^{\Lip}(X,E)$ and $\lambda \in \mathbb{K}$. Then
	$$
	((f+\lambda g)(x_n)-(f+\lambda g)(y_n))_{n\in\N} = (f(x_n)-f(y_n))_{n\in\N} +\lambda (g(x_n)-g(y_n))_{n\in\N} \in Y(E)
	$$
	for all $(x_n)_{n\in\N},(y_n)_{n\in\N} \in Z(X)$. Thus $f+\lambda g \in \Pi_{(Z,Y)}^{\Lip}(X,E)$.

	Let us prove that $\pi_{(Z,Y)}^{\Lipnorm}$ is a norm in the space of $(Z,Y)$-summing Lipschitz functions. It is clear that $\pi_{(Z,Y)}^{\Lipnorm}(f)=0$ whenever $f=0$. On the other hand, assume that $\pi_{(Z,Y)}^{\Lipnorm}(f)=0$. By Lemma \ref{Lem 6} we have that $\Lipnorm(f) \leq \pi_{(Z,Y)}^{\Lipnorm}(f)=0$, i.e., $\Lipnorm(f)=0$ and then $f=0$. In the same vein, let $\alpha \in \mathbb{K}$ and $f\in \Pi_{(Z,Y)}^{\Lip}(X,E)$. Note that
	$$
	\pi_{(Z,Y)}^{\Lipnorm}(\alpha f) = \Lipnorm(\widehat{\alpha f}) = \Lipnorm(\alpha \hat{f}) = |\alpha|\Lipnorm(\hat{f})= |\alpha|\pi_{(Z,Y)}^{\Lipnorm}(f).
	$$

	Finally, given $f,g \in \Pi_{(Z,Y)}^{\Lip}(X,E)$, the triangle inequality is derived as follows:
	$$
	\pi_{(Z,Y)}^{\Lipnorm}(f+g) = \Lipnorm(\widehat{f+g}) = \Lipnorm(\hat{f}+\hat{g}) \leq \Lipnorm(\hat{f})+\Lipnorm(\hat{g}) = \pi_{(Z,Y)}^{\Lipnorm}(f)+\pi_{(Z,Y)}^{\Lipnorm}(g).
	$$

	Hence we have shown that $\left(\Pi_{(Z,Y)}^{\Lip}(X,E),\pi_{(Z,Y)}^{\Lipnorm}\right)$ is a normed space. Now, let $(f_n)_{n\in\N}$ be a Cauchy sequence in $\Pi_{(Z,Y)}^{\Lip}(X,E)$. By Lemma \ref{Lem 6} we have that $(f_n)_{n\in\N}$ is a Cauchy sequence in $\Lip(X,E)$ as well, so there is $f \in \Lip(X,E)$ such that $f_n\xrightarrow{\Lipnorm} f$. Since for each $n\in\N$ the function $f_n \in \Pi_{(Z,Y)}^{\Lip}(X,E)$, then the map $\widehat{f_n}: Z(X) \to Y(E)$ is a well-defined Lipschitz operator such that $\Lipnorm(\widehat{f_n})=\pi_{(Z,Y)}^{\Lipnorm}(f_n)$. Thus $(\widehat{f_n})_{n\in\N}$ is a Cauchy sequence in the Banach space $\Lip(Z(X),Y(E))$ and we can assure the existence of $g \in \Lip(Z(X),Y(E))$ such that $\widehat{f_n} \xrightarrow{\Lipnorm} g$. If we can prove that $\hat{f}=g$ holds, then we obtain that $f$ is $(Z,Y)$-summing Lipschitz, and the completeness of $\pi_{(Z,Y)}^{\Lipnorm}$ easily follows. To this end, let us consider the induced maps $\widetilde{f_n},\tilde{f}:\ell_\infty(X)\to \ell_\infty(E)$. By Theorem \ref{Th 1} it is clear that they are well-defined Lipschitz operators. Since $\ell_\infty$ is a Lipschitz stable sequence class by Proposition \ref{Prop 7}, we deduce that $\Lipnorm(\widetilde{f_n}-\tilde{f})=\Lipnorm(\widetilde{f_n-f})=\Lipnorm(f_n-f) \to 0$, and then $\widetilde{f_n} \xrightarrow{\Lipnorm} \tilde{f}$. Let $(x_n)_{n\in\N} \in Z(X)$. Clearly $\tilde{f}((x_n)_{n\in\N})=\hat{f}((x_n)_{n\in\N})$ and we claim that $\hat{f}((x_n)_{n\in\N})=g((x_n)_{n\in\N})$. Indeed, since $\widetilde{f_n} \xrightarrow{\Lipnorm} \tilde{f}$ then $\widetilde{f_n}((x_n)_{n\in\N}) \xrightarrow{\left\|\cdot\right\|_\infty} \tilde{f}((x_n)_{n\in\N})$. Similarly, from $\widehat{f_n} \xrightarrow{\Lipnorm} g$ we have that $\widehat{f_n}((x_n)_{n\in\N}) \xrightarrow{\left\|\cdot\right\|_{Y(E)}} g((x_n)_{n\in\N})$. Due to $Y(E) \subseteq \ell_\infty(E)$ with $\left\|\cdot\right\|_\infty \leq \left\|\cdot\right\|_{Y(E)}$, it follows that $\widehat{f_n}((x_n)_{n\in\N}) \xrightarrow{\left\|\cdot\right\|_{\infty}} g((x_n)_{n\in\N})$. This gives that $\hat{f}((x_n)_{n\in\N})=g((x_n)_{n\in\N})$ by the uniqueness of the limit of a sequence, as desired. Hence $\hat{f} \in \Lip(Z(X),Y(E))$ and we deduce that $f \in \Pi_{(Z,Y)}^{\Lip}(X,E)$. Furthermore
	$$
	\pi_{(Z,Y)}^{\Lipnorm}(f_n-f) = \Lipnorm(\widehat{f_n-f}) = \Lipnorm(\widehat{f_n}-\hat{f})=\Lipnorm(\widehat{f_n}-g) \to 0,
	$$
	proving that $f_n \xrightarrow{\pi_{(Z,Y)}^{\Lipnorm}} f$.

	$(ii):$ Let $h \in \Lip(X)$ and $y\in E$. We want to prove that $h\cdot y \in \Pi_{(Z,Y)}^{\Lip}(X,E)$. To this end, it suffices to show $\widehat{h \cdot y} \in \Lip(Z(X),Y(E))$. Firstly, the condition established in the hypothesis allows us to guarantee that $\hat{h}((x_n)_{n\in\N}) = (h(x_n))_{n\in\N} \in Z(\K) \subseteq Y(\K)$ for all $(x_n)_{n\in\N} \in Z(X)$. Then
	\begin{align*}
		&\|(\widehat{h \cdot y})((x_n)_{n\in\N})-(\widehat{h \cdot y})((y_n)_{n\in\N})\|_{Y(E)} = \|((h \cdot y)(x_n))_{n\in\N}-((h\cdot y)(y_n))_{n\in\N}\|_{Y(E)}\\
		&= \|(h(x_n)y)_{n\in\N}-(h(y_n)y)_{n\in\N}\|_{Y(E)}= \|y(h(x_n)-h(y_n))_{n\in\N}\|_{Y(E)}\\
		&= \|y\|_E \|(h(x_n)-h(y_n))_{n\in\N}\|_{Y(\mathbb{K})}\leq\|y\|_E \|\hat{h}((x_n)_{n\in\N})-\hat{h}((y_n)_{n\in\N})\|_{Z(\mathbb{K})}\\
		 &\leq \Lipnorm(h)\|y\|_E\|(x_n-y_n)_{n\in\N}\|_{Z(X)}
	\end{align*}
	for all $(x_n)_{n\in\N},(y_n)_{n\in\N} \in Z(X)$, where last inequality is obtained since $h\in\Lip(X) = \Pi_{(Z,Z)}^{\Lip}(X)$ with $\Lipnorm(h)=\Lipnorm(\hat{h})$ by the Lipschitz stability of $Z$. Hence $\widehat{h \cdot y} \in \Lip(Z(X),Y(E))$ with $\Lipnorm(\widehat{h \cdot y}) \leq \Lipnorm(h)\|y\|_E$, and then $h\cdot y \in \Pi_{(Z,Y)}^{\Lip}(X,E)$ with $\pi_{(Z,Y)}^{\Lipnorm}(h\cdot y)\leq \Lipnorm(h)\|y\|_E$. For the remaining inequality, we apply Lemma \ref{Lem 6} to obtain
	$$
	\Lipnorm(h)\|y\|_E=\Lipnorm(h\cdot y) \leq \pi_{(Z,Y)}^{\Lipnorm}(h\cdot y).
	$$

	$(iii):$ Let $T \in \L(E,E_0)$, $f \in \Pi_{(Z,Y)}^{\Lip}(X,E)$ and $h \in \Lip(X_0,X)$. We claim that $\widehat{T \circ f \circ h} \in \Lip(Z(X_0),Y(E_0))$. Indeed, let $(x_n)_{n\in\N},(y_n)_{n\in\N} \in Z(X_0)$ and note that
	\begin{align*}
		&\|(\widehat{T \circ f \circ h})((x_n)_{n\in\N})-(\widehat{T \circ f \circ h})((y_n)_{n\in\N})\|_{Y(E_0)}\\
		&= \|((T\circ f \circ h)(x_n))_{n\in\N}-((T\circ f \circ h)(y_n))_{n\in\N}\|_{Y(E_0)}\\
		&= \|(T(f(h(x_n))-f(h(y_n))))_{n\in\N}\|_{Y(E_0)}\leq \|T\| \|(f(h(x_n))-f(h(y_n)))_{n\in\N}\|_{Y(E)}\\
		&= \|T\| \|\hat{f}((h(x_n))_{n\in\N})-\hat{f}((h(y_n))_{n\in\N})\|_{Y(E)}\leq \|T\|\Lipnorm(\hat{f})\|(h(x_n))_{n\in\N}-(h(y_n))_{n\in\N}\|_{Z(X)}\\
		&= \|T\|\Lipnorm(\hat{f})\|\hat{h}((x_n)_{n\in\N})-\hat{h}((y_n)_{n\in\N})\|_{Z(X)}\leq \|T\|\Lipnorm(\hat{f})\Lipnorm(\hat{h})\|(x_n-y_n)_{n\in\N}\|_{Z(X_0)}\\
		&= \|T\|\pi_{(Z,Y)}^{\Lipnorm}(f)\Lipnorm(h)\|(x_n-y_n)_{n\in\N}\|_{Z(X_0)},
	\end{align*}
	by applying the Lipschitz stability of the sequence classes. Hence $\widehat{T \circ f \circ h}\in \Lip(Z(X_0),Y(E_0))$ with $\Lipnorm(\widehat{T \circ f \circ h})\leq \|T\|\pi_{(Z,Y)}^{\Lipnorm}(f)\Lipnorm(h)$. Thus $T\circ f \circ h \in \Pi_{(Z,Y)}^{\Lip}(X_0,E_0)$ with $\pi_{(Z,Y)}^{\Lipnorm}(T\circ f \circ h) \leq \|T\|\pi_{(Z,Y)}^{\Lipnorm}(f)\Lipnorm(h)$.
\end{proof}

\begin{remark}
Note that condition $Z(\K) \subseteq Y(\K)$ with $\left\|\cdot\right\|_{Y(\K)} \leq \left\|\cdot\right\|_{Z(\K)}$ in Theorem \ref{Th 8} is mandatory. Let us consider $Z=\ell_2$, $Y=\ell_1$, $X=E=\R$, $h=\mathrm{Id}_\R \in \Lip(\R)$ and $y=1$. By Proposition \ref{Prop 7}, $\ell_1$ and $\ell_2$ are Lipschitz stable sequence classes. The induced operator $\widehat{h \cdot y}: \ell_2(\R) \to \ell_1(\R)$ is given by
$$
(\widehat{h \cdot y})((a_n)_{n\in\N}) = ((h \cdot y)(a_n))_{n\in\N} = (a_n)_{n\in\N} \qquad ((a_n)_{n\in\N} \in \ell_2(\R)).
$$

Nevertheless, this operator is not well-defined, and to see this, it is sufficient to consider the sequence $(1/n)_{n\in\N} \in \ell_2(\R)\backslash \ell_1(\R)$. Thus $h \cdot y \notin \Pi_{(\ell_2,\ell_1)}^{\Lip}(\R,\R)$ and axiom $(ii)$ fails.
\end{remark}

We conclude this section with a result that once again highlights the evident differences between the linear/multilinear framework and the Lipschitz setting in which we are working. According to \cite[Example 2.11]{BotCamNas-24}, the Banach ideal $[\Pi_p,\pi_p]$ of absolutely $p$-summing linear operators coincides with the Banach ideal of $(\ell_p^w,\ell_p)$-summing linear operators. Next, we show that, in our context, this relationship is not satisfied in general, and moreover, that $[\Pi_{(\ell_p^w,\ell_p)}^{\Lip},\pi_{(\ell_p^w,\ell_p)}^{\Lipnorm}]$ is not even a Lipschitz ideal.

\begin{proposition}\label{Prop 12}
	Let $p \in [1,\infty)$. Then $\Pi_{(\ell_p^w,\ell_p)}^{\Lip}(X,E) \subseteq \Pi_p^{\Lip}(X,E)$ with $\pi_p^{\Lipnorm}(f)\leq \pi_{(\ell_p^w,\ell_p)}^{\Lipnorm}(f)$ for all $f \in \Pi_{(\ell_p^w,\ell_p)}^{\Lip}(X,E)$. Moreover, the inclusion is strict in general. In particular, $[\Pi_{(\ell_p^w,\ell_p)}^{\Lip},\pi_{(\ell_p^w,\ell_p)}^{\Lipnorm}]$ is not a Banach Lipschitz ideal.
\end{proposition}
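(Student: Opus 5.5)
The plan has three parts: prove the inclusion by comparing the weak $\ell_p$ norm of differences with the Farmer--Johnson Lipschitz $p$-summing quantity, then show strictness with the function $h=\|\cdot\|_\infty$ on $c_0(\R)$ from Proposition \ref{Prop new}, and finally read off the failure of the ideal axioms from the same example.

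\textbf{Inclusion.} Recall the definition of Lipschitz $p$-summing maps from \cite{FarJoh-09}. A function $f\in\Lip(X,E)$ belongs to $\Pi_p^{\Lip}(X,E)$ if there is $C>0$ with
$$
\Big(\sum_{k=1}^N\|f(x_k)-f(y_k)\|^p\Big)^{\frac1p}\le C\sup_{g\in B_{\Lip(X)}}\Big(\sum_{k=1}^N|g(x_k)-g(y_k)|^p\Big)^{\frac1p}
$$
for all finite families $(x_k)_{k=1}^N,(y_k)_{k=1}^N$ in $X$, and $\pi_p^{\Lipnorm}(f)$ is the infimum of such $C$. Let $f\in\Pi_{(\ell_p^w,\ell_p)}^{\Lip}(X,E)$ and fix such finite families. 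By $(iii)\Rightarrow(iv)$ in Theorem \ref{Th 1}, applied with $Z=\ell_p^w$ and $Y=\ell_p$, we get
$$
\Big(\sum_{k=1}^N\|f(x_k)-f(y_k)\|^p\Big)^{\frac1p}\le \pi_{(\ell_p^w,\ell_p)}^{\Lipnorm}(f)\,\|(x_k-y_k)_{k=1}^N\|_{\ell_p^w(X)}.
$$
Every $\phi\in B_{X^*}$ is a zero-preserving Lipschitz function with $\Lipnorm(\phi)=\|\phi\|\le1$, and $\phi(x_k-y_k)=\phi(x_k)-\phi(y_k)$. Hence $\|(x_k-y_k)_{k=1}^N\|_{\ell_p^w(X)}$ is bounded by the supremum over $g\in B_{\Lip(X)}$ appearing in the Farmer--Johnson inequality. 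This gives $f\in\Pi_p^{\Lip}(X,E)$ with $\pi_p^{\Lipnorm}(f)\le\pi_{(\ell_p^w,\ell_p)}^{\Lipnorm}(f)$.

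\textbf{Strictness.} I will use the function $h(x)=\|x\|_\infty$ on $c_0(\R)$ from the proofs of the two preceding propositions, which satisfies $h\in\Lip(c_0(\R))$. Every $g\in\Lip(X)$ satisfies the Farmer--Johnson inequality with $C=\Lipnorm(g)$, because $g/\Lipnorm(g)$ itself lies in $B_{\Lip(X)}$. Therefore $h\in\Pi_p^{\Lip}(c_0(\R),\R)$. On the other hand, the proof of Proposition \ref{Prop new} shows that the canonical basis $(e_n)_{n\in\N}$ lies in $\ell_p^w(c_0(\R))$, while $\hat h((e_n)_{n\in\N})=(1,1,\ldots)\notin\ell_p(\R)$. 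So $\hat h:\ell_p^w(c_0(\R))\to\ell_p(\R)$ is not even well-defined, and $h\notin\Pi_{(\ell_p^w,\ell_p)}^{\Lip}(c_0(\R),\R)$. This makes the inclusion strict.

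\textbf{Not a Banach Lipschitz ideal.} The same example breaks axiom $(ii)$. Take $X=c_0(\R)$, $E=\R$ and $y=1$. Then $h\cdot y=h$ is not $(\ell_p^w,\ell_p)$-summing, although $h\in\Lip(X)$ and axiom $(ii)$ requires $h\cdot y\in\Pi_{(\ell_p^w,\ell_p)}^{\Lip}(X,E)$.

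The main obstacle is only bookkeeping. The comparison of the weak $\ell_p$ norm with the Lipschitz-dual supremum must be stated for differences $x_k-y_k$, which is exactly why linear functionals embed into the Farmer--Johnson supremum. For the counterexample, everything needed is already contained in Proposition \ref{Prop new}.
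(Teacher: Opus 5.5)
Your proposal is correct and follows the paper's proof essentially step by step. For the inclusion, the paper obtains the finite-sequence inequality by padding with zeros, which is exactly the content of $(iii)\Rightarrow(iv)$ in Theorem \ref{Th 1}, and then uses $B_{X^*}\subseteq B_{\Lip(X)}$; strictness and the failure of axiom $(ii)$ both come from $h=\|\cdot\|_\infty$ on $c_0(\R)$ applied to the canonical basis, just as you propose.
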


\begin{proof}
	Let $f \in \Pi_{(\ell_p^w,\ell_p)}^{\Lip}(X,E)$. Then $\hat{f} \in \Lip(\ell_p^w(X),\ell_p(E))$ with $\Lipnorm(\hat{f}) = \pi_{(\ell_p^w,\ell_p)}^{\Lipnorm}(f)$. Fix $N \in \N$ and let $(x_1,\ldots,x_N)$, $(y_1,\ldots,y_N)\in X^N$. Define the sequences $(u_n)_{n\in\N}$ and $(v_n)_{n\in\N}$ by
	$$
	u_k = \begin{cases}
		x_k \quad &k \leq N,\\
		0 \quad &k > N.
	\end{cases} \qquad
	v_k = \begin{cases}
		y_k \quad &k \leq N,\\
		0 \quad &k > N.
	\end{cases}
	$$

	Clearly $(u_n)_{n\in\N}, (v_n)_{n\in\N} \in \ell_p^w(X)$. Therefore,
	\begin{align*}
		\left(\sum_{k=1}^N \|f(x_k)-f(y_k)\|_E^p \right)^{\frac{1}{p}} &= \|\hat{f}((u_n)_{n\in\N})-\hat{f}((v_n)_{n\in\N})\|_{\ell_p(E)}\\
		&\leq \Lipnorm(\hat{f})\|(u_n-v_n)_{n\in\N}\|_{\ell_p^w(X)}\\
		&= \pi_{(\ell_p^w,\ell_p)}^{\Lipnorm}(f) \sup_{\phi \in B_{X^*}} \left(\sum_{k=1}^N |\phi(x_k-y_k)|^p \right)^{\frac{1}{p}}.
	\end{align*}

	On the other hand, every $\phi \in B_{X^*}$ is a scalar-valued Lipschitz map on $X$ with $\Lipnorm(\phi) = \|\phi\| \leq 1$, and then $B_{X^*} \subseteq B_{\Lip(X)}$. Thus
	$$
	\sup_{\phi \in B_{X^*}}\left( \sum_{k=1}^{N} |\phi(x_k)-\phi(y_k)|^p \right)^{\frac{1}{p}} \leq \sup_{g \in B_{\Lip(X)}} \left( \sum_{k=1}^{N} |g(x_k)-g(y_k)|^p \right)^{\frac{1}{p}}.
	$$

	Consequently $f \in \Pi_p^{\Lip}(X,E)$ with $\pi_p^{\Lipnorm}(f) \leq \pi_{(\ell_p^w,\ell_p)}^{\Lipnorm}(f)$.

	In order to prove that this inclusion is strict in general, let us consider the mapping $h: c_0(\R) \to \R$ given by $h((x_n)_{n\in\N})=\|(x_n)_{n\in\N}\|_\infty$ for all $(x_n)_{n\in\N} \in c_0(\R)$. It is clear that $h \in \Lip(c_0(\R))$ with $\Lipnorm(h) \leq 1$. Therefore, given $N \in \N$ and $(x_1,\ldots,x_N)$, $(y_1,\ldots,y_N) \in c_0(\R)^N$, we have
	$$
	\left( \sum_{k=1}^{N}|h(x_k)-h(y_k)|^p \right)^{\frac{1}{p}} \leq \sup_{g \in B_{\Lip(c_0(\R))}} \left( \sum_{k=1}^{N} |g(x_k)-g(y_k)|^p \right)^{\frac{1}{p}}.
	$$

	Hence $h \in \Pi_p^{\Lip}(c_0(\R),\R)$. Nevertheless, by the proof of Proposition \ref{Prop new}, the canonical basis $(e_n)_{n\in\N} \in \ell_p^w(c_0(\R))$, whereas $\hat{h}((e_n)_{n\in\N}) = (1,1,\ldots) \notin \ell_p(\R)$. So $\hat{h}: \ell_p^w(c_0(\R)) \to \ell_p(\R)$ is not well-defined,  and thus $h \notin \Pi_{(\ell_p^w,\ell_p)}^{\Lip}(c_0(\R),\R)$.

	Finally, assume that $[\Pi_{(\ell_p^w,\ell_p)}^{\Lip},\pi_{(\ell_p^w,\ell_p)}^{\Lipnorm}]$ is a Banach Lipschitz ideal. Then axiom $(ii)$ in the definition of Banach Lipschitz ideal would imply that $g \cdot y \in \Pi_{(\ell_p^w,\ell_p)}^{\Lip}(X,E)$ for all $g \in \Lip(X)$ and all $y \in E$. Just considering $X= c_0(\R)$, $E=\R$, $g=h$ and $y=1$, we obtain that $h \cdot  1 = h \in \Pi_{(\ell_p^w,\ell_p)}^{\Lip}(c_0(\R),\R)$ which is a contradiction.
\end{proof}

\section{Type and cotype of Lipschitz functions}\label{Sec 3}

Let $p,q \in (0,\infty)$ and let $E,F$ be Banach spaces. A bounded linear operator $T\in\L(E,F)$ is said to have:
\begin{itemize}
	\item[$(i)$] Rademacher type $p$ if there exists $C>0$ so that for any $N\in\N$ and $(x_1,\ldots,x_N) \in E^N$,
	\begin{equation}\label{eq 1}
	\left(\int_{0}^{1}\left\| \sum_{k=1}^N r_k(t)T(x_k) \right\|_F^2 dt\right)^{\frac{1}{2}} \leq C \left( \sum_{k=1}^N \|x_k\|_E^p \right)^{\frac{1}{p}}.
	\end{equation}

	The set of all bounded linear operators of Rademacher type $p$ from $E$ into $F$ is denoted as $\T_p(E,F)$, and it becomes a Banach space if we endow it with the norm $\|T\|_{\T_p} = \inf\{C: (\ref{eq 1}) \text{ is satisfied}\}$.
	\item[$(ii)$] Rademacher cotype $q$ if there exists $K>0$ so that for any $N\in\N$ and $(x_1,\ldots,x_N) \in E^N$,
	\begin{equation}\label{eq 2}
	\left(\sum_{k=1}^N \|T(x_k)\|_F^q \right)^{\frac{1}{q}} \leq K \left(\int_{0}^{1}\left\| \sum_{k=1}^{N} r_k(t)x_k \right\|_E^2 dt \right)^{\frac{1}{2}}.
	\end{equation}
	The set of all bounded linear operators of Rademacher cotype $q$ from $E$ into $F$ is denoted by $\C_q(E,F)$, and it becomes a Banach space if we equip this one with the complete norm $\|T\|_{\C_q} = \inf\{K: (\ref{eq 2}) \text{ is satisfied}\}$.
\end{itemize}

Moreover, according to \cite[Chapter 21]{Pie-80} it is well-known that $[\T_p,\left\|\cdot\right\|_{\T_p}]$ and $[\C_q,\left\|\cdot\right\|_{\C_q}]$ are Banach operator ideals.

\subsection{Definition and initial results}

In this section we extend to the Lipschitz setting the notions of type and cotype of linear operators, and we prove that they can be seen as particular cases of $(Z,Y)$-summing Lipschitz functions for certain Lipschitz stable sequence classes $Z$ and $Y$. We also show that, contrary to the multilinear situation, the type classes yield Banach Lipschitz ideals whereas the cotype classes fail to do so.

\begin{definition}\label{Def 2.1}
	Let $0<p,q<\infty$ and $f \in \Lip(X,E)$. Then $f$ is said to have:
	\begin{itemize}
		\item[$(i)$] type $p$ if there exists $C>0$ so that for any $N\in\N$, $(\lambda_1,\ldots, \lambda_N) \in \R^N$ and $(x_1,\ldots, x_N)$, $(y_1,\ldots, y_N) \in X^N$,
		\begin{equation}\label{eq 3}
		\left(\int_{0}^{1}\left\| \sum_{k=1}^{N} \lambda_k r_k(t)(f(x_k)-f(y_k)) \right\|_E^2 dt \right)^{\frac{1}{2}} \leq C \left(\sum_{k=1}^{N} |\lambda_k|^p \|x_k-y_k\|_X^p \right)^{\frac{1}{p}}.
		\end{equation}
		\item[$(ii)$] cotype $q$ if there exists $K>0$ so that for any $N\in\N$, $(\lambda_1,\ldots, \lambda_N) \in \R^N$ and $(x_1,\ldots, x_N)$, $(y_1,\ldots, y_N) \in X^N$,
		\begin{equation}\label{eq 4}
		\left(\sum_{k=1}^{N} |\lambda_k|^q\|f(x_k)-f(y_k)\|_E^q \right)^{\frac{1}{q}} \leq K \left(\int_{0}^{1}\left\| \sum_{k=1}^{N} \lambda_k r_k(t)(x_k-y_k) \right\|_X^2 dt \right)^{\frac{1}{2}}.
		\end{equation}
	\end{itemize}
	The subclasses of zero-preserving Lipschitz functions from $X$ into $E$ having type $p$ and cotype $q$ are denoted by $\T_p^{\Lip}(X,E)$ and $\C_q^{\Lip}(X,E)$, respectively. In addition, we define
		$$
		\|f\|_{\T_p^{\Lipnorm}} = \inf\{C: (\ref{eq 3}) \text{ is satisfied}\}, \qquad \|f\|_{\C_q^{\Lipnorm}} = \inf\{K: (\ref{eq 4}) \text{ is satisfied}\}.
		$$
\end{definition}

In fact, a line of reasoning similar to that in \cite[p. 2989]{FarJoh-09} allows us to ensure that the preceding definitions are the same if we restrict to the case when $\lambda_k=1$ for all $k\in\{1,\ldots, N\}$.

\begin{remark}\label{Rem 2.2}
	\begin{itemize}
		\item[$(i)$] If $p>2$, then we claim that $f=0$ is the only Lipschitz function of type $p$. Indeed, let $f \in \T_p^{\Lip}(X,E)$ and assume that $f \neq 0$. Then given $N\in\N$ and $(x_1,\ldots,x_N)$, $(y_1,\ldots,y_N) \in X^N$ we have
		$$
		\left(\int_{0}^{1} \left\| \sum_{k=1}^{N} r_k(t)(f(x_k)-f(y_k)) \right\|^2_E dt \right)^{\frac{1}{2}} \leq \|f\|_{\T_p^{\Lipnorm}} \left(\sum_{k=1}^{N}\|x_k-y_k\|^p_X \right)^{\frac{1}{p}}.
		$$
		
		Particularly, let us take $x := x_1 = \cdots = x_N \in X$ and $y:= y_1 = \cdots = y_N \in X$ such that $f(x) \neq f(y)$. Then
		\begin{align*}
		&\left(\int_{0}^{1} \left\| \sum_{k=1}^{N} r_k(t)(f(x_k)-f(y_k)) \right\|^2_E dt \right)^{\frac{1}{2}} \leq \|f\|_{\T_p^{\Lipnorm}} \left(\sum_{k=1}^{N}\|x_k-y_k\|^p_X \right)^{\frac{1}{p}}\\
		&\Leftrightarrow \|f(x)-f(y)\|_E \left(\int_{0}^{1} \left|\sum_{k=1}^{N} r_k(t) \right|^2 \right)^{\frac{1}{2}} \leq \|f\|_{\T_p^{\Lipnorm}}\|x-y\|_X N^{\frac{1}{p}}\\
		&\Leftrightarrow \|f(x)-f(y)\|_E N^{\frac{1}{2}} \leq \|f\|_{\T_p^{\Lipnorm}}\|x-y\|_X N^{\frac{1}{p}}\\
		&\Leftrightarrow N^{\frac{1}{2}-\frac{1}{p}} \leq \|f\|_{\T_p^{\Lipnorm}} \frac{\|x-y\|_X}{\|f(x)-f(y)\|_E}.
		\end{align*}

		Since $p>2$ we have that $\frac{1}{2}-\frac{1}{p}>0$, and then $N^{\frac{1}{2}-\frac{1}{p}} \to \infty$ as $N \to \infty$, which is a contradiction.

		\item[$(ii)$] Using the same arguments as in the previous case, we can show that if $q<2$, then the only Lipschitz function of cotype $q$ is $f=0$.

		\item[$(iii)$] If $p \leq 1$, then we claim that any Lipschitz function has type $p$. Indeed, let $f \in \Lip(X,E)$, $N \in \N$ and $(x_1,\ldots,x_N)$, $(y_1,\ldots,y_N)\in X^N$. Then
		\begin{align*}
			\left(\int_{0}^{1} \left\| \sum_{k=1}^N r_k(t)(f(x_k)-f(y_k)) \right\|_E^2 dt \right)^{\frac{1}{2}} &\leq \left(\int_{0}^{1} \left( \sum_{k=1}^{N} |r_k(t)| \|f(x_k)-f(y_k)\|_E \right)^2 dt \right)^{\frac{1}{2}}\\
			&= \sum_{k=1}^{N} \|f(x_k)-f(y_k)\|_E \leq \Lipnorm(f)\sum_{k=1}^{N} \|x_k-y_k\|_X\\
			&\leq \Lipnorm(f)\left(\sum_{k=1}^{N}\|x_k-y_k\|_X^p \right)^{\frac{1}{p}}. 
		\end{align*}

		Hence $f \in \T_p^{\Lip}(X,E)$ with $\|f\|_{\T_p^{\Lipnorm}} = \Lipnorm(f)$.

		\item[$(iv)$] Although Definition \ref{Def 2.1} (ii) is given for the case $q < \infty$, it is not difficult to see that it can be extended in the same way to the case $q = \infty$. In fact, applying the same techniques as in the preceding case, we can prove that if $q=\infty$, then every Lipschitz function has cotype $\infty$.
	\end{itemize}
\end{remark}

In view of the previous comments we will restrict our study on type and cotype of Lipschitz functions to the cases $p \in (1,2]$ and $q \in [2,\infty)$. Moreover, such numbers $p$ and $q$ are called a proper type and a proper cotype for Lipschitz functions, respectively. Next, we introduce some concrete examples of Lipschitz functions having a proper type or cotype.

\begin{example}
	\begin{itemize}
		\item[$(i)$] We claim that, regardless of the Banach space $X$, if $E$ is isomorphic to a Hilbert space $F$, then every function $f\in \Lip(X,E)$ has type $2$ with $\|f\|_{\T_2^{\Lipnorm}} \leq \Lipnorm(f) d(E,F)$, where $d$ denotes the Banach-Mazur distance. Indeed, let $\varepsilon > 0$. Then there exists a linear isomorphism $\varphi: E \to F$ such that $\|\varphi\|\|\varphi^{-1}\| \leq d(E,F)+\varepsilon$. Let $N \in \N$, $(x_1,\ldots,x_N)$, $(y_1,\ldots,y_N) \in X^N$ and note that by the orthogonality of Rademacher functions:
		\begin{align*}
			\left(\int_{0}^{1}\left\| \sum_{k=1}^N r_k(t)(f(x_k)-f(y_k)) \right\|_E^2 dt \right)^{\frac{1}{2}} &\leq \|\varphi^{-1}\|\left(	\int_{0}^{1}\left\|\sum_{k=1}^N r_k(t)\varphi(f(x_k)-f(y_k)) \right\|_F^2 dt \right)^{\frac{1}{2}}\\
			&= \|\varphi^{-1}\|\left( \sum_{k=1}^{N} \|\varphi(f(x_k)-f(y_k))\|_F^2 \right)^{\frac{1}{2}}\\
			&\leq \|\varphi\|\|\varphi^{-1}\| \left(\sum_{k=1}^{N} \|f(x_k)-f(y_k)\|_E^2 \right)^{\frac{1}{2}}\\
			&\leq \Lipnorm(f)(d(E,F)+\varepsilon)\left( \sum_{k=1}^{N} \|x_k-y_k\|_X^2 \right)^{\frac{1}{2}}.
		\end{align*}
	Hence $f \in \T_2^{\Lip}(X,E)$ with $\|f\|_{\T_2^{\Lipnorm}}\leq \Lipnorm(f)(d(E,F)+\varepsilon)$. Just letting $\varepsilon \to 0$ the proof concludes.
		\item[$(ii)$] Similarly, if $X$ is isomorphic to a Hilbert space $Y$, then regardless of the Banach space $E$ we can assure that any $f \in \Lip(X,E)$ has cotype $2$ with $\|f\|_{\C_2^{\Lipnorm}} \leq \Lipnorm(f)d(X,Y)$.
		\item[$(iii)$] Let us consider the map $f = \mathrm{Id}_{\ell_1(\mathbb{R})}$. Assume that there exists $p\in (1,2]$ such that $f\in\T_p^{\Lip}(\ell_1(\R),\ell_1(\R))$. Then given $N\in\N$, $(x_1,\ldots,x_N)$, $(y_1,\ldots,y_N) \in \ell_1(\R)^N$,
		$$
		\left( \int_{0}^{1}\left\| \sum_{k=1}^{N} r_k(t)(f(x_k)-f(y_k)) \right\|_{\ell_1(\R)}^2 dt \right)^{\frac{1}{2}} \leq \|f\|_{\T_p^{\Lipnorm}}\left( \sum_{k=1}^{N} \|x_k-y_k\|_{\ell_1(\R)}^p \right)^{\frac{1}{p}}.
		$$

		Let us take the following particular elements of $\ell_1(\R)^N$
		$$
		(x_1,\ldots, x_N) = ((2e_n^{(1)})_{n\in\N}, \ldots, (2e_n^{(N)})_{n\in\N}), \quad (y_1,\ldots, y_N) = ((e_n^{(1)})_{n\in\N}, \ldots, (e_n^{(N)})_{n\in\N}).
		$$ 
		Then
		\begin{align*}
			&\left( \int_{0}^{1}\left\| \sum_{k=1}^{N} r_k(t)(f(x_k)-f(y_k)) \right\|_{\ell_1(\R)}^2 dt \right)^{\frac{1}{2}} \leq \|f\|_{\T_p^{\Lipnorm}}\left( \sum_{k=1}^{N} \|x_k-y_k\|_{\ell_1(\R)}^p \right)^{\frac{1}{p}}\\
			&\Leftrightarrow \left(\int_{0}^{1} \left\| \sum_{k=1}^{N} r_k(t)(e_n^{(k)})_{n\in\N} \right\|_{\ell_1(\R)}^2 dt \right)^{\frac{1}{2}} \leq \|f\|_{\T_p^{\Lipnorm}}\left(\sum_{k=1}^{N} \|(e_n^{(k)})_{n\in\N}\|_{\ell_1(\R)}^p \right)^{\frac{1}{p}},
		\end{align*}
		and this is equivalent to $N \leq \|f\|_{\T_p^{\Lipnorm}}N^{\frac{1}{p}}$, which is a contradiction. Thus $\mathrm{Id}_{\ell_1(\R)} \in \Lip(\ell_1(\R),\ell_1(\R))$ has no proper type. By similar reasoning it can be shown that $\mathrm{Id}_{c_0(\R)} \in \Lip(c_0(\R),c_0(\R))$ has no proper cotype.
	\end{itemize}
\end{example}

The following inclusion properties between type and cotype of Lipschitz functions can be easily deduced in view of Definition \ref{Def 2.1}.

\begin{proposition}\label{Prop 2.4}
	Let $p,r\in (1,2]$ and $q,s \in [2,\infty)$ be such that $p \leq r$ and $s \leq q$. Then:
	\begin{itemize}
		\item[$(i)$] $\T_r^{\Lip}(X,E) \subseteq \T_p^{\Lip}(X,E)$ with $\|f\|_{\T_p^{\Lipnorm}} \leq \|f\|_{\T_r^{\Lipnorm}}$ for all $f \in \T_r^{\Lip}(X,E)$.
		\item[$(ii)$] $\C_s^{\Lip}(X,E) \subseteq \C_q^{\Lip}(X,E)$ with $\|f\|_{\C_q^{\Lipnorm}} \leq \|f\|_{\C_s^{\Lipnorm}}$ for all $f \in \C_s^{\Lip}(X,E)$. 
	\end{itemize}
\end{proposition}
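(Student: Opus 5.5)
The plan is a direct comparison of the right-hand sides in \eqref{eq 3} and \eqref{eq 4}, using the monotonicity of $\ell_s$-norms on finite sequences. The key tool is the elementary inequality
$$
\Bigl(\sum_{k=1}^N |a_k|^{u}\Bigr)^{\frac{1}{u}} \leq \Bigl(\sum_{k=1}^N |a_k|^{v}\Bigr)^{\frac{1}{v}} \qquad (0<v\leq u<\infty),
$$
valid for any finite scalar family $(a_1,\ldots,a_N)$. I would recall it briefly: normalise so that the right-hand side equals $1$; then $|a_k|\leq 1$, so $|a_k|^u\leq |a_k|^v$, and summing gives the claim.

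For $(i)$, take $f\in\T_r^{\Lip}(X,E)$ and let $C$ be any constant satisfying \eqref{eq 3} with exponent $r$. Fix $N$, scalars $\lambda_k$ and points $x_k,y_k$. Applying the inequality with $a_k=|\lambda_k|\|x_k-y_k\|_X$, $u=r$ and $v=p$ (allowed since $p\leq r$) gives
$$
\Bigl(\sum_{k=1}^N |\lambda_k|^r\|x_k-y_k\|_X^r\Bigr)^{\frac1r}\leq \Bigl(\sum_{k=1}^N |\lambda_k|^p\|x_k-y_k\|_X^p\Bigr)^{\frac1p}.
$$
Chaining this with \eqref{eq 3} for $r$ yields \eqref{eq 3} for $p$ with the same constant $C$. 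Hence $f\in\T_p^{\Lip}(X,E)$, and taking the infimum over all admissible $C$ gives $\|f\|_{\T_p^{\Lipnorm}}\leq\|f\|_{\T_r^{\Lipnorm}}$.

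For $(ii)$, take $f\in\C_s^{\Lip}(X,E)$ with a constant $K$ satisfying \eqref{eq 4} for exponent $s$. Applying the inequality with $a_k=|\lambda_k|\|f(x_k)-f(y_k)\|_E$, $u=q$ and $v=s$ (allowed since $s\leq q$) bounds the left-hand side of \eqref{eq 4} for $q$ by the left-hand side for $s$. That in turn is at most $K$ times the Rademacher average. So \eqref{eq 4} holds for $q$ with the same $K$, and taking the infimum gives the norm inequality.

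There is no genuine obstacle; the only point requiring care is the direction of the inclusion of $\ell_s$-norms. The larger exponent gives the smaller norm, which is exactly why type passes from $r$ down to $p$ and cotype passes from $s$ up to $q$. Membership of $f$ in $\Lip(X,E)$ is part of the hypothesis in both cases, so nothing further needs checking.
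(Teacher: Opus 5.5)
Your argument is correct and is essentially the paper's own. The paper gives no written proof, remarking only that the inclusions follow directly from Definition~\ref{Def 2.1}; your use of the monotonicity of finite $\ell_u$-norms in the exponent, applied with the same constants to the right-hand side of \eqref{eq 3} and to the left-hand side of \eqref{eq 4}, is exactly that deduction.
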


If we focus on the cases for which the sequence classes coincide with $\Rad, \RAD$ and $\ell_p$ in Theorem \ref{Th 1}, then the following characterizations of the spaces $\T_p^{\Lip}(X,E)$ and $\C_q^{\Lip}(X,E)$ easily follow from Corollary \ref{Cor 2}.

\begin{corollary}\label{Cor 2.6}
	Let $p \in (1,2]$ and $f \in \Lip(X,E)$. The following assertions are equivalent:
	\begin{itemize}
		\item[$(i)$] $f \in \T_p^{\Lip}(X,E)$.
		\item[$(ii)$] $\hat{f} \in\Lip(\ell_p(X),\RAD(E))$.
		\item[$(iii)$] $\hat{f} \in \Lip(\ell_p(X),\Rad(E))$.
	\end{itemize}

	Furthermore, $\Lipnorm(\hat{f}:\ell_p(X) \to \RAD(E)) = \Lipnorm(\hat{f}:\ell_p(X) \to \Rad(E)) = \|f\|_{\T_p^{\Lipnorm}}$.
\end{corollary}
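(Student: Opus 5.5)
The plan is to link $(i)$ to $(ii)$ through Theorem \ref{Th 1}, and then pass from $(ii)$ to $(iii)$ with Corollary \ref{Cor 2}. Both $\ell_p$ and $\RAD$ are finitely determined sequence classes, so Theorem \ref{Th 1} applies to $\hat{f}:\ell_p(X)\to\RAD(E)$. In that setting, condition $(iv)$ of Theorem \ref{Th 1} says there is $C>0$ such that
$$
\Bigl(\int_0^1\Bigl\|\sum_{k=1}^N r_k(t)(f(x_k)-f(y_k))\Bigr\|_E^2dt\Bigr)^{\frac12}\le C\Bigl(\sum_{k=1}^N\|x_k-y_k\|_X^p\Bigr)^{\frac1p}
$$
for all finite families $(x_k)_{k=1}^N$ and $(y_k)_{k=1}^N$ in $X$. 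Here we use that the $\RAD(E)$-norm of a finite sequence is its $\Rad$-norm, since the partial sums stabilise.

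This inequality is exactly (\ref{eq 3}) with $\lambda_k=1$. By the remark after Definition \ref{Def 2.1}, following \cite[p. 2989]{FarJoh-09}, this special case is equivalent to the general definition, with the same optimal constant. I would still justify that step briefly. For real $\lambda_k$, rational approximation together with the continuity of both sides reduces to rational $\lambda_k$. Clearing denominators then reduces to integer weights. An integer weight $m_k$ is absorbed by repeating the pair $(x_k,y_k)$ $|m_k|$ times and swapping $x_k,y_k$ when $m_k<0$. This handles the right-hand side after adjusting exponents. On the left-hand side, repeated independent Rademachers do not give $m_k r_k$ directly. I would fix this with the contraction principle / Kahane inequality applied to the block sums $\sum_j r_{k,j}$.

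If this gets delicate, the fallback is simply to cite the paper's remark as given. That yields $(i)\Leftrightarrow(ii)$ via $(iii)\Leftrightarrow(iv)$ of Theorem \ref{Th 1}, together with $\Lipnorm(\hat f:\ell_p(X)\to\RAD(E))=\inf\{C\}=\|f\|_{\T_p^{\Lipnorm}}$.

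For $(ii)\Leftrightarrow(iii)$, I would apply Corollary \ref{Cor 2} with $Z_1=Z_2=\ell_p$, $Y_1=\Rad$ and $Y_2=\RAD$. This needs $\ell_p\prec\ell_p$ and $\Rad\prec\RAD$. The first holds because the tails of an $\ell_p$ sequence go to zero. The second is the known fact that $\Rad(E)$ is the closure of $c_{00}(E)$ in $\RAD(E)$ with the induced norm, characterised by the vanishing Cauchy tails (\cite{DieJarTon-95}, as used in the previous proof). Both $Z_2=\ell_p$ and $Y_2=\RAD$ are finitely determined, so Corollary \ref{Cor 2} gives the equivalence together with equality of the Lipschitz norms. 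This completes the chain of norm equalities.

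The main obstacle is the reduction from general $\lambda_k$ to $\lambda_k=1$. Because $f$ is nonlinear, $\lambda(f(x)-f(y))$ cannot be written as $f(x')-f(y')$. The argument therefore has to go through repetition and Rademacher averaging, which is the Farmer–Johnson style trick, rather than through homogeneity.
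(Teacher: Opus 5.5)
Your overall route is the paper's. Theorem \ref{Th 1} $(iii)\Leftrightarrow(iv)$ for the finitely determined pair $(\ell_p,\RAD)$ gives $(i)\Leftrightarrow(ii)$ once the weights $\lambda_k$ are removed. Corollary \ref{Cor 2} with $\ell_p\prec\ell_p$ and $\Rad\prec\RAD$ then gives $(ii)\Leftrightarrow(iii)$ and the norm identities. The paper offers nothing more: it takes the weight reduction from its remark after Definition \ref{Def 2.1}, so your fallback is exactly its proof. One minor point: identifying the $\RAD(E)$-norm of $(f(x_k)-f(y_k))_{k=1}^N$ with its $\Rad$-norm also needs that truncating a Rademacher sum does not increase its $L_2$-norm. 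That is the contraction principle with coefficients in $\{0,1\}$; ``the partial sums stabilise'' only covers truncations beyond $N$.

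\textbf{The step that would fail} is your own proof of the weight reduction. Repeating $(x_k,y_k)$ $m$ times adds $m\|x_k-y_k\|^p$ on the right, an effective weight $m^{1/p}$. On the left, the block $\sum_{j\le m}r_{k,j}$ only acts like $\sqrt m\,r_k$: conditioning and Jensen yield the coefficient $\int_0^1|\sum_{j\le m}r_j(t)|\,dt\asymp\sqrt m$, and the contraction principle can only shrink coefficients. Reaching weight $\lambda_k$ on the left therefore forces $m_k\asymp\lambda_k^2$. That leaves $(\sum_k\lambda_k^2\|x_k-y_k\|^p)^{1/p}$ on the right, which for $p<2$ is not controlled by $(\sum_k\lambda_k^p\|x_k-y_k\|^p)^{1/p}$. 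Rescaling cannot help, because the two sides then scale like $s$ and $s^{2/p}$: a single pair repeated $m$ times gives only $\sqrt m\,\|f(x)-f(y)\|\le Cm^{1/p}\|x-y\|$. Even for $p=2$ the Jensen step costs a constant (asymptotically $\sqrt{\pi/2}$), so the norm equality would be lost. Farmer--Johnson repetition works for Lipschitz $p$-summing maps only because there the weights are multiplicities of $p$-th powers on both sides.

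\textbf{The fix.} The reduction is true with the same constant, by \emph{subdivision} rather than repetition, using that $X$ is a vector space. Swap $x_k,y_k$ if $\lambda_k<0$ and drop terms with $\lambda_k=0$. Fix $c>0$ and set
\[
n_k=\lceil 1/(c\lambda_k)\rceil,\qquad z_{k,i}=y_k+\tfrac{i}{n_k}(x_k-y_k),\qquad w_{k,i}=f(z_{k,i})-f(z_{k,i-1}).
\]
Then $\sum_k\lambda_kr_k(f(x_k)-f(y_k))$ is the average over $(s_k)\in\prod_k\{1,\dots,n_k\}$ of $\sum_k\lambda_kn_kr_kw_{k,s_k}$. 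By the triangle inequality in $L_2([0,1];E)$ some choice $(s_k)$ satisfies
\begin{align*}
\Bigl\|\sum_k\lambda_kr_k(f(x_k)-f(y_k))\Bigr\|_{L_2(E)}&\le\Bigl\|\sum_k\lambda_kn_kr_kw_{k,s_k}\Bigr\|_{L_2(E)}\le\max_k(\lambda_kn_k)\Bigl\|\sum_kr_kw_{k,s_k}\Bigr\|_{L_2(E)}\\
&\le\Bigl(\frac1c+\max_k\lambda_k\Bigr)C\Bigl(\sum_k\frac{\|x_k-y_k\|^p}{n_k^p}\Bigr)^{\frac1p}\le(1+c\max_k\lambda_k)\,C\Bigl(\sum_k\lambda_k^p\|x_k-y_k\|^p\Bigr)^{\frac1p}.
\end{align*}
Here the second inequality is the contraction principle, and the third is the unweighted inequality applied to the pairs $(z_{k,s_k},z_{k,s_k-1})$. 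Letting $c\to0$ gives the weighted inequality with constant $C$. This supplies the argument the paper's remark leaves implicit.
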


\begin{corollary}\label{Cor 2.7}
	Let $q\in[2,\infty)$ and $f \in \Lip(X,E)$. The following assertions are equivalent:
	\begin{itemize}
		\item[$(i)$] $f \in \C_q^{\Lip}(X,E)$.
		\item[$(ii)$] $\hat{f} \in\Lip(\RAD(X),\ell_q(E))$.
		\item[$(iii)$] $\hat{f} \in \Lip(\Rad(X),\ell_q(E))$.
	\end{itemize}

	Furthermore, $\Lipnorm(\hat{f}:\RAD(X) \to \ell_q(E)) = \Lipnorm(\hat{f}:\Rad(X) \to \ell_q(E)) = \|f\|_{\C_q^{\Lipnorm}}$.
	\end{corollary}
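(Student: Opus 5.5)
The plan is to mirror the argument behind Corollary \ref{Cor 2.6}, with the roles of domain and target classes exchanged. The basic tools are Theorem \ref{Th 1}, applied with $Z=\RAD$ and $Y=\ell_q$, and Corollary \ref{Cor 2}, applied with $Z_1=\Rad$, $Z_2=\RAD$ and $Y_1=Y_2=\ell_q$.

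\emph{Step 1: (i)$\Leftrightarrow$(ii) and the norm identity for $\RAD$.} Both $\RAD$ and $\ell_q$ are finitely determined. For finite families, $\|(x_k-y_k)_{k=1}^N\|_{\RAD(X)}$ is exactly the right-hand side of (\ref{eq 4}) with $\lambda_k=1$ and $K$ replaced by $C$. Likewise $\|(f(x_k)-f(y_k))_{k=1}^N\|_{\ell_q(E)}$ is the left-hand side of (\ref{eq 4}) with $\lambda_k=1$. So condition (iv) of Theorem \ref{Th 1} is exactly the cotype inequality with $\lambda_k=1$.

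By the remark following Definition \ref{Def 2.1}, this restricted inequality is equivalent to the full definition with the same best constant. To keep things self-contained, one can also absorb the scalars directly: given real $\lambda_k$, split off those $\lambda_k$ that vanish. For the others, $\lambda_k(f(x_k)-f(y_k))$ is not of the form $f(x_k')-f(y_k')$, so the Farmer–Johnson argument is needed. That argument approximates $|\lambda_k|$ by rationals and repeats the pair $(x_k,y_k)$ with multiplicity. Since Rademacher averages of repeated vectors do not simply rescale, I will rely on the cited remark rather than redo it.

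With that equivalence in hand, Theorem \ref{Th 1} gives: (i) implies (iv), which implies (iii) of the theorem, which is (ii) here. Conversely, (ii) implies (iv), which is (i). Theorem \ref{Th 1} also yields $\Lipnorm(\hat f:\RAD(X)\to\ell_q(E))=\inf\{C\}=\|f\|_{\C_q^{\Lipnorm}}$.

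\emph{Step 2: (ii)$\Leftrightarrow$(iii).} Here I invoke Corollary \ref{Cor 2} with $Z_1=\Rad\prec\RAD=Z_2$ and $Y_1=Y_2=\ell_q$. Its hypotheses require $Z_2$ and $Y_2$ to be finitely determined, which holds for $\RAD$ and $\ell_q$. They also require $Z_1\prec Z_2$ and $Y_1\prec Y_2$.

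\emph{Main obstacle.} The relation $\Rad\prec\RAD$ is a known fact (see \cite{BotCam-17}): $\Rad(X)$ is the closure of $c_{00}(X)$ in $\RAD(X)$ and carries the induced norm, and membership is characterized by the vanishing of tails. The relation $\ell_q\prec\ell_q$ needs checking. It is the trivial case: $\ell_q(E)$ is closed in itself, and every $\ell_q$ sequence has tails $\|(x_k)_{k=n}^N\|_{\ell_q}\to0$. Corollary \ref{Cor 2} then gives the equivalence and equality of the Lipschitz norms.

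Combining both steps yields
$$\Lipnorm(\hat f:\RAD(X)\to\ell_q(E))=\Lipnorm(\hat f:\Rad(X)\to\ell_q(E))=\|f\|_{\C_q^{\Lipnorm}}.$$
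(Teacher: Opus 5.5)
Your proposal is correct and follows the same route as the paper. Theorem~\ref{Th 1} for the finitely determined pair $(\RAD,\ell_q)$, plus the reduction to $\lambda_k=1$, gives (i)$\Leftrightarrow$(ii) with $\Lipnorm(\hat f)=\|f\|_{\C_q^{\Lipnorm}}$, and Corollary~\ref{Cor 2} with $\Rad\prec\RAD$ and $\ell_q\prec\ell_q$ gives (ii)$\Leftrightarrow$(iii) with equal Lipschitz norms.

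Your doubt about transferring the Farmer--Johnson repetition argument is well founded. The reduction nevertheless holds with the same constant if you subdivide instead of repeat. For rational $\lambda_k>0$, cut $[y_k,x_k]$ into $m_k=M/\lambda_k$ equal steps $z_{k,j}$. Then $\lambda_k(f(x_k)-f(y_k))$ is the average over $j$ of $M\bigl(f(z_{k,j})-f(z_{k,j-1})\bigr)$, and every step has difference $\frac{\lambda_k}{M}(x_k-y_k)$. Applying the unweighted inequality to one step per $k$ and averaging (using convexity of the $\ell_q(E)$-norm) yields (\ref{eq 4}) with the same $K$.
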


The previous corollaries allow us to ensure that the equalities $\T_p^{\Lip}(X,E) = \Pi_{(\ell_p,\Rad)}^{\Lip}(X,E)$ and $\C_q^{\Lip}(X,E) = \Pi_{(\Rad,\ell_q)}^{\Lip}(X,E)$ hold. Nevertheless, since $\Rad$ is not a Lipschitz stable sequence class, we cannot apply Theorem \ref{Th 8}. Thus, unlike what happens in the multilinear setting, here we are not able to automatically guarantee that these subclasses are Banach Lipschitz ideals, as is the case in \cite[Theorem 3.7]{BotCam-16}. In fact, this leads to another substantial difference: $[\T_p^{\Lip},\left\|\cdot\right\|_{\T_p^{\Lipnorm}}]$ is a Banach Lipschitz ideal, whereas $[\C_q^{\Lip},\left\|\cdot\right\|_{\C_q^{\Lipnorm}}]$ is not.

\begin{theorem}\label{Cor 2.8}
	Let $p\in(1,2]$. Then $[\T_p^{\Lip},\left\|\cdot\right\|_{\T_p^{\Lipnorm}}]$ is a Banach Lipschitz ideal.
\end{theorem}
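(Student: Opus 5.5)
We verify the three axioms of a Banach Lipschitz ideal directly, following the same pattern as in the proof of Theorem \ref{Th 8}. Since $\Rad$ is not Lipschitz stable, Theorem \ref{Th 8} cannot be invoked. Instead we use that the \emph{domain} class $\ell_p$ is Lipschitz stable (Proposition \ref{Prop 7}), together with the fact that in the target only bounded linear maps act. By Corollary \ref{Cor 2.6} we identify $\T_p^{\Lip}(X,E)=\Pi_{(\ell_p,\RAD)}^{\Lip}(X,E)$ with $\|f\|_{\T_p^{\Lipnorm}}=\Lipnorm(\hat f:\ell_p(X)\to\RAD(E))$. Since $\ell_p$ and $\RAD$ are finitely determined, we may work with the finite inequality (\ref{eq 3}) with $\lambda_k=1$ whenever convenient.

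\textbf{Axiom (i).} Linearity of $\T_p^{\Lip}(X,E)$ and the norm properties follow from $\widehat{f+\lambda g}=\hat f+\lambda\hat g$, exactly as in Theorem \ref{Th 8}. For $\Lipnorm(f)\le\|f\|_{\T_p^{\Lipnorm}}$, we take $N=1$ in (\ref{eq 3}); this gives $\|f(x)-f(y)\|\le C\|x-y\|$, since $\|r_1(t)v\|=\|v\|$. (Lemma \ref{Lem 6} is not available because $\RAD$ is not stable, but the direct check suffices.) For completeness, a Cauchy sequence $(f_n)$ in $\T_p^{\Lip}$ is Cauchy in $\Lip(X,E)$, so it has a limit $f$. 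For fixed $N$ and fixed points, the left side of (\ref{eq 3}) for $f_n-f_m$ converges to that for $f-f_m$ as $n\to\infty$, because it is a finite sum and $f_n\to f$ pointwise. Hence $\|f-f_m\|_{\T_p^{\Lipnorm}}\le\limsup_n\|f_n-f_m\|_{\T_p^{\Lipnorm}}\to0$. This route avoids identifying the limit of $\hat f_n$ in $\Lip(\ell_p(X),\RAD(E))$.

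\textbf{Axiom (ii).} For $h\in\Lip(X)$ and $y\in E$ we compute
$$\Big(\int_0^1\Big\|\sum_k r_k(t)(h(x_k)-h(y_k))y\Big\|^2dt\Big)^{1/2}=\|y\|\Big(\sum_k|h(x_k)-h(y_k)|^2\Big)^{1/2},$$
using orthonormality of the Rademacher functions (i.e. $\RAD(\K)=\ell_2$). Since $p\le2$, we have $\|\cdot\|_{\ell_2}\le\|\cdot\|_{\ell_p}$, and therefore this is at most $\|y\|\Lipnorm(h)(\sum_k\|x_k-y_k\|^p)^{1/p}$. This is the only place where $p\in(1,2]$ is used, and it plays the role of the hypothesis $Z(\K)\subseteq Y(\K)$ in Theorem \ref{Th 8}. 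The reverse inequality $\Lipnorm(h)\|y\|=\Lipnorm(h\cdot y)\le\|h\cdot y\|_{\T_p^{\Lipnorm}}$ follows from Axiom (i).

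\textbf{Axiom (iii).} For $T\in\L(E,E_0)$ we use the pointwise bound $\|\sum_k r_k(t)T(v_k)\|\le\|T\|\|\sum_k r_k(t)v_k\|$, valid for each $t$. This gives the factor $\|T\|$; the linearity of $T$ is essential here. Applying (\ref{eq 3}) for $f$ at the points $h(x_k),h(y_k)$ and then using $\|h(x_k)-h(y_k)\|\le\Lipnorm(h)\|x_k-y_k\|$ inside the $\ell_p$-sum yields the bound $\|T\|\,\|f\|_{\T_p^{\Lipnorm}}\Lipnorm(h)$. The second step is just the Lipschitz stability of $\ell_p$.

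\textbf{Main obstacle.} The expected difficulty is completeness, since the argument of Theorem \ref{Th 8} relied on Lemma \ref{Lem 6} for the target class. The finite-dimensional limiting argument in Axiom (i) is how I would handle it. The remaining axioms are direct estimates.
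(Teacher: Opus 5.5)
Your proof is correct and follows essentially the same route as the paper. You verify the three axioms directly from the finite type inequality: $N=1$ gives $\Lipnorm(f)\le\|f\|_{\T_p^{\Lipnorm}}$, completeness comes from passing to the limit pointwise for fixed $N$ and fixed points, axiom (ii) uses orthonormality of the Rademacher functions together with $\|\cdot\|_{\ell_2}\le\|\cdot\|_{\ell_p}$, and axiom (iii) uses the pointwise $\|T\|$ bound. The differences are only cosmetic. You get the norm properties through the identification with $\Pi_{(\ell_p,\RAD)}^{\Lip}$ rather than Minkowski's inequality, and in the completeness step you note that the Rademacher integral is a finite average instead of invoking dominated convergence.
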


\begin{proof}
	$(i):$ Firstly, we will show that $\T_p^{\Lip}(X,E)$ is a vector space. Let $f,g \in \T_p^{\Lip}(X,E), \alpha \in \K, N \in \N$ and $(x_1,\ldots,x_N)$, $(y_1,\ldots,y_N) \in X^N$. By Minkowski's inequality
	\begin{align*}
		&\left( \int_0^1\left\| \sum_{k=1}^N r_k(t)((f+\alpha g)(x_k)-(f+\alpha g)(y_k)) \right\|_E^2 dt \right)^{\frac{1}{2}}\\
		&\leq \left( \int_0^1\left\| \sum_{k=1}^N r_k(t)(f(x_k)-f(y_k)) \right\|_E^2 dt \right)^{\frac{1}{2}} + |\alpha| \left( \int_0^1\left\| \sum_{k=1}^N r_k(t)(g(x_k)-g(y_k)) \right\|_E^2 dt \right)^{\frac{1}{2}}\\
		&\leq (\|f\|_{\T_p^{\Lipnorm}}+|\alpha|\|g\|_{\T_p^{\Lipnorm}})\left( \sum_{k=1}^N \|x_k-y_k\|^p_X \right)^{\frac{1}{p}}.
	\end{align*}

	Hence $f+\alpha g \in \T_p^{\Lip}(X,E)$ with $\|f+\alpha g\|_{\T_p^{\Lipnorm}}\leq \|f\|_{\T_p^{\Lipnorm}}+|\alpha|\|g\|_{\T_p^{\Lipnorm}}$. For the homogeneity, it follows immediately from the definition, so $\left\|\cdot\right\|_{\T_p^{\Lipnorm}}$ is a seminorm. To show it is a norm, take $N=1$. Then for every $f \in \T_p^{\Lip}(X,E)$ and every $x,y \in X$, we have $\|f(x)-f(y)\|_E \leq \|f\|_{\T_p^{\Lipnorm}}\|x-y\|_X$. Therefore $\Lipnorm(f) \leq \|f\|_{\T_p^{\Lipnorm}}$. Thus if $\|f\|_{\T_p^{\Lipnorm}}=0$ then $\Lipnorm(f)=0$ and we obtain $f=0$.

	It remains to prove completeness. Towards this end, let $(f_n)_{n\in\N}$ be a Cauchy sequence in $(\T_p^{\Lip}(X,E),\left\|\cdot\right\|_{\T_p^{\Lipnorm}})$. By the above comments we have that $(f_n)_{n\in\N}$ is a Cauchy sequence in the Banach space $(\Lip(X,E),\Lipnorm)$ as well, so there exists $f \in \Lip(X,E)$ such that $f_n \xrightarrow{\Lipnorm} f$. For any $\varepsilon > 0$, there is $n_0 \in \N$ so that $\|f_n-f_m\|_{\T_p^{\Lipnorm}} \leq \varepsilon$ for all $n,m\geq n_0$. Fix $n \geq n_0$ and let $N \in \N$, $(x_1,\ldots,x_N)$ and $(y_1,\ldots,y_N) \in X^N$. Then for every $m \geq n_0$ we have
	$$
	\left(\int_0^1 \left\| \sum_{k=1}^{N} r_k(t)((f_n-f_m)(x_k)-(f_n-f_m)(y_k)) \right\|_E^2 dt \right)^{\frac{1}{2}} \leq \varepsilon \left( \sum_{k=1}^{N} \|x_k-y_k\|_X^p \right)^{\frac{1}{p}}.
	$$

	Now let $m \to \infty$. Since the sum is finite and $f_m \xrightarrow{\Lipnorm} f$, the integrand converges pointwise to the one corresponding to $f_n-f$. Moreover, if $M=\sup_{m\in\N}\Lipnorm(f_m)<\infty$, then for every $m\geq n_0$ and $t\in[0,1]$,
	$$
	\left\|\sum_{k=1}^N r_k(t)((f_n-f_m)(x_k)-(f_n-f_m)(y_k))\right\|_E^2 \leq (M+\Lipnorm(f_n))^2\left(\sum_{k=1}^N\|x_k-y_k\|_X\right)^2. 
	$$

	Thus we can apply Dominated Convergence Theorem to obtain
	\begin{align*}
	&\left(\int_0^1 \left\|\sum_{k=1}^N r_k(t)((f_n-f)(x_k)-(f_n-f)(y_k))\right\|_E^2 dt\right)^{\frac{1}{2}}\\
	&= \left( \lim_{m\to\infty} \int_0^1 \left\|\sum_{k=1}^N r_k(t)((f_n-f_m)(x_k)-(f_n-f_m)(y_k))\right\|_E^2 dt\right)^{\frac{1}{2}}\\
	&\leq \varepsilon \left( \sum_{k=1}^{N} \|x_k-y_k\|_X^p \right)^{\frac{1}{p}}.
	\end{align*}

	Hence $\|f_n-f\|_{\T_p^{\Lipnorm}} \leq \varepsilon$ for all $n \geq n_0$. In particular, $f \in \T_p^{\Lip}(X,E)$ and $f_n \xrightarrow{\left\|\cdot\right\|_{\T_p^{\Lipnorm}}} f$, as desired.

	$(ii):$ Let $h \in \Lip(X)$ and $y \in E$. Given $N \in \N$ and $(x_1,\ldots,x_N)$, $(y_1,\ldots,y_N) \in X^N$, orthogonality of Rademacher mappings gives
	\begin{align*}
		\left( \int_0^1 \left\| \sum_{k=1}^{N} r_k(t)((h\cdot y)(x_k)-(h\cdot y)(y_k)) \right\|_E^2 dt \right)^{\frac{1}{2}} &= \|y\|_E \left( \int_0^1 \left| \sum_{k=1}^{N} r_k(t)(h(x_k)-h(y_k)) \right|^2 dt \right)^{\frac{1}{2}}\\
		&=\|y\|_E \left( \sum_{k=1}^N |h(x_k)-h(y_k)|^2 \right)^{\frac{1}{2}}\\
		&\leq \Lipnorm(h)\|y\|_E\left( \sum_{k=1}^{N} \|x_k-y_k\|^2_X \right)^{\frac{1}{2}}\\
		&\leq \Lipnorm(h)\|y\|_E\left( \sum_{k=1}^{N} \|x_k-y_k\|^p_X \right)^{\frac{1}{p}}.
	\end{align*}

	Consequently $h\cdot y \in \T_p^{\Lip}(X,E)$ with $\|h\cdot y\|_{\T_p^{\Lipnorm}} \leq \Lipnorm(h)\|y\|_E$. For the reverse inequality, note that $\Lipnorm(h)\|y\|_E = \Lipnorm(h\cdot y) \leq \|h \cdot y\|_{\T_p^{\Lipnorm}}$.

	$(iii):$ Let $X_0$ and $E_0$ be Banach spaces, let $T \in \L(E,E_0)$, $f\in\T_p^{\Lip}(X,E)$ and $h \in \Lip(X_0,X)$. Given $N \in \N$ and $(x_1,\ldots,x_N)$, $(y_1,\ldots,y_N) \in X^N$, we have
	\begin{align*}
		&\left( \int_0^1 \left\| \sum_{k=1}^{N} r_k(t)((T\circ f \circ h)(x_k)-(T\circ f \circ h)(y_k)) \right\|_{E_0}^2 dt \right)^{\frac{1}{2}}\\
		&\leq \|T\|\left( \int_0^1 \left\| \sum_{k=1}^{N} r_k(t)(f(h(x_k))-f(h(y_k))) \right\|_E^2 dt \right)^{\frac{1}{2}}\\
		&\leq \|T\|\|f\|_{\T_p^{\Lipnorm}}\left( \sum_{k=1}^{N} \|h(x_k)-h(y_k)\|_X^p \right)^{\frac{1}{p}}\\
		&\leq \|T\|\|f\|_{\T_p^{\Lipnorm}}\Lipnorm(h)\left( \sum_{k=1}^{N} \|x_k-y_k\|_{X_0}^p \right)^{\frac{1}{p}}.
	\end{align*}

	Hence $T\circ f \circ h \in \T_p^{\Lip}(X_0,E_0)$ with $\|T\circ f \circ h\|_{\T_p^{\Lipnorm}} \leq \|T\|\|f\|_{\T_p^{\Lipnorm}}\Lipnorm(h)$.
	\end{proof}

	\begin{proposition}
		Let $q \in [2,\infty)$. Then $[\C_q^{\Lip},\left\|\cdot\right\|_{\C_q^{\Lipnorm}}]$ is not a Banach Lipschitz ideal.
	\end{proposition}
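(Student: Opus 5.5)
The plan is to show that axiom $(ii)$ of the definition of Banach Lipschitz ideal fails, reusing the same counterexample that already showed $\Rad$, $\RAD$ and $\ell_p^w$ are not Lipschitz stable. Take $X=c_0(\R)$, $E=\R$, $y=1$, and the norm functional $h((x_n)_{n\in\N})=\|(x_n)_{n\in\N}\|_\infty$, which is in $\Lip(c_0(\R))$ with $\Lipnorm(h)\le 1$. If $[\C_q^{\Lip},\left\|\cdot\right\|_{\C_q^{\Lipnorm}}]$ were a Banach Lipschitz ideal, then $h\cdot 1=h$ would belong to $\C_q^{\Lip}(c_0(\R),\R)$. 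I will contradict this by testing inequality (\ref{eq 4}) with all $\lambda_k=1$.

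For each $N$, choose $x_k=(e_n^{(k)})_{n\in\N}$ and $y_k=0$ for $k=1,\ldots,N$. Then the left-hand side of (\ref{eq 4}) is
$$
\left(\sum_{k=1}^N |h(x_k)-h(0)|^q\right)^{\frac{1}{q}}=N^{\frac{1}{q}}.
$$
For the right-hand side, for every $t\in[0,1]$ we have $\|\sum_{k=1}^N r_k(t)(e_n^{(k)})_{n\in\N}\|_\infty=1$, exactly as computed in the proof that $\RAD$ is not Lipschitz stable. Hence the right-hand side equals $K$. The inequality $N^{1/q}\le K$ for all $N$ is impossible since $q<\infty$, so $h\notin\C_q^{\Lip}(c_0(\R),\R)$ and axiom $(ii)$ fails.

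If one prefers to use the general form of (\ref{eq 4}) with arbitrary $\lambda_k$, the same choice with $\lambda_k=1$ suffices, because the constant must work for every choice of the $\lambda_k$.

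There is no real obstacle here. The only point to check is that the definition of Banach Lipschitz ideal forces every rank-one map $h\cdot y$ with $h\in\Lip(X)$ to lie in the class. Note that the linear identity on $\R$ has cotype $q\ge 2$, so the failure comes entirely from the nonlinear scalar factor $h$. This contrasts with the linear setting, where $\phi\cdot y$ with $\phi\in X^*$ inherits cotype from the scalar field.
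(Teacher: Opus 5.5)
Your proposal is correct and follows essentially the same route as the paper: the same norm functional $h=\left\|\cdot\right\|_\infty$ on $c_0(\R)$, the same test vectors $x_k=(e_n^{(k)})_{n\in\N}$, $y_k=0$ giving $N^{1/q}\le K$, and the same contradiction with axiom $(ii)$ via $h\cdot 1=h$.
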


	\begin{proof}
		Let $h:c_0(\R) \to \R$ given by $(x_n)_{n\in\N} \mapsto \|(x_n)_{n\in\N}\|_\infty$. We know that $h \in \Lip(c_0(\R))$. Let $(e_n)_{n\in\N}$ the canonical basis of $c_0(\R)$. For every $N\in\N$, choose $x_k = (e_n^{(k)})_{n\in\N}$ and $y_k = (0)_{n\in\N}$ for $k=1,\ldots, N$. Then
		$$
		\left( \sum_{k=1}^{N} \left|h((e_n^{(k)})_{n\in\N})-h((0)_{n\in\N})\right|^q \right)^{\frac{1}{q}} = N^{\frac{1}{q}},
		$$
		whereas
		$$
		\left( \int_0^1 \left\| \sum_{k=1}^{N} r_k(t)(e_n^{(k)})_{n\in\N} \right\|_\infty^2 dt \right)^{\frac{1}{2}} = 1.
		$$

		Therefore we cannot find any constant $K>0$ for which the cotype inequality is satisfied for this map, and hence $h \notin \C_q^{\Lip}(c_0(\R),\R)$.

		Let us assume that $[\C_q^{\Lip},\left\|\cdot\right\|_{\C_q^{\Lipnorm}}]$ is a Banach Lipschitz ideal. By axiom $(ii)$, for every pair of Banach spaces $(X,E)$, every $g \in \Lip(X)$ and every $y \in E$, we have $g\cdot y \in \C_q^{\Lip}(X,E)$ and $\|g\cdot y\|_{\C_q^{\Lipnorm}} = \Lipnorm(g)\|y\|$. Just taking $X=c_0(\R)$, $E=\R$, $g=h$ and $y=1$, we obtain $h\cdot 1 = h \in \C_q^{\Lip}(c_0(\R),\R)$, a contradiction.
	\end{proof}

\subsection{Lipschitz ideal properties of $[\T_p^{\Lip},\left\|\cdot\right\|_{\T_p^{\Lipnorm}}]$: composition and maximality}

Let $[\A,\left\|\cdot\right\|_\A]$ be a Banach operator ideal. According to \cite{Saa-17}, a function $f \in \Lip(X,E)$ is said to be $\A$-factorizable if there exist a Banach space $F$, a map $h \in \Lip(X,F)$ and an operator $S \in \A(F,E)$ such that $f = S \circ h$. The set of all $\A$-factorizable Lipschitz functions from $X$ into $E$ is denoted by $\A \circ \Lip(X,E)$ and it becomes a Banach space endowed with the norm $\|f\|_{\A\circ\Lip} = \inf\{\|S\|_\A \Lipnorm(h)\}$ where the infimum is taken over all such factorizations of $f$. Even more, by \cite[Theorem 2.1]{Saa-17} we have that $[\A\circ \Lip,\left\|\cdot\right\|_{\A\circ\Lip}]$ is a Banach Lipschitz ideal.

Our next goal will be to analyse the relationship between the Banach Lipschitz ideal $[\T_p^{\Lip},\left\|\cdot\right\|_{\T_p^{\Lipnorm}}]$ and the Lipschitz composition ideal $[\T_p\circ \Lip,\left\|\cdot\right\|_{\T_p\circ \Lip}]$. To this end, let us recall some basic notions introduced in \cite{Kal-04}. Given $x\in X$, the evaluation functional $f \mapsto f(x)$ from $\Lip(X)$ into $\mathbb{K}$ will be denoted by $\delta_x$, and it satisfies $\|\delta_x-\delta_y\|=\|x-y\|$ for all $x,y\in X$. Moreover, the mapping $\delta_X:X \to \mathcal{F}(X)$ given by $x \mapsto \delta_x$ is an isometric embedding, where $\F(X)$ denotes the Lipschitz-free space. In fact, for every $f\in \Lip(X,E)$ there exists a unique $T_f \in \L(\F(X),E)$ so that $f = T_f \circ \delta_X$ and $\|T_f\| = \Lipnorm(f)$.

\begin{lemma}\label{Lem 2.9}
	Let $p\in(1,2]$ and $f \in \Lip(X,E)$. If $T_f \in \T_p(\F(X),E)$, then $f \in \T_p^{\Lip}(X,E)$ with $\|f\|_{\T_p^{\Lipnorm}} \leq \|T_f\|_{\T_p}$.
\end{lemma}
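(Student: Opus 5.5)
The plan is to use the linearization $f = T_f\circ\delta_X$ and transfer the linear type $p$ inequality for $T_f$ on $\F(X)$ to the Lipschitz inequality (\ref{eq 3}). By the remark after Definition \ref{Def 2.1}, it is enough to verify (\ref{eq 3}) with all $\lambda_k=1$. Even so, the argument below works verbatim for arbitrary real $\lambda_k$, so that reduction is not actually needed.

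Fix $N\in\N$ and $(x_1,\ldots,x_N)$, $(y_1,\ldots,y_N)\in X^N$, and put $\mu_k := \delta_{x_k}-\delta_{y_k}\in\F(X)$. By linearity of $T_f$ and $f=T_f\circ\delta_X$, we have $f(x_k)-f(y_k) = T_f(\mu_k)$. Hence
$$
\left(\int_0^1\left\|\sum_{k=1}^N r_k(t)(f(x_k)-f(y_k))\right\|_E^2dt\right)^{\frac12} = \left(\int_0^1\left\|\sum_{k=1}^N r_k(t)T_f(\mu_k)\right\|_E^2dt\right)^{\frac12}.
$$
Applying the type $p$ inequality (\ref{eq 1}) to $T_f\in\T_p(\F(X),E)$ with the vectors $\mu_1,\ldots,\mu_N$, this quantity is bounded by $C\left(\sum_{k=1}^N\|\mu_k\|_{\F(X)}^p\right)^{1/p}$ for every admissible constant $C$.

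Next use the isometric property $\|\delta_{x_k}-\delta_{y_k}\|=\|x_k-y_k\|_X$ recalled before the lemma. This turns the right-hand side into $C\left(\sum_{k=1}^N\|x_k-y_k\|_X^p\right)^{1/p}$. If general $\lambda_k$ are wanted, apply (\ref{eq 1}) to $\lambda_k\mu_k$ instead and obtain $|\lambda_k|\,\|x_k-y_k\|_X$ on the right.

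Thus $f\in\T_p^{\Lip}(X,E)$ with $\|f\|_{\T_p^{\Lipnorm}}\le C$. Taking the infimum over $C$ gives $\|f\|_{\T_p^{\Lipnorm}}\le\|T_f\|_{\T_p}$. There is no real obstacle here; the only points needing care are that the differences $\delta_{x_k}-\delta_{y_k}$, rather than single points, are the right vectors to feed into the linear inequality, and that the isometric identity for $\delta_X$ is applied to differences.
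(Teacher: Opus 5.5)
Your proof is correct and follows the paper's argument: write $f(x_k)-f(y_k)=T_f(\delta_{x_k}-\delta_{y_k})$, apply the linear type $p$ inequality for $T_f$ on $\F(X)$, and use $\|\delta_{x_k}-\delta_{y_k}\|=\|x_k-y_k\|_X$. The paper checks only $\lambda_k=1$, relying on the reduction remark after the definition, so your treatment of general $\lambda_k$ via the vectors $\lambda_k(\delta_{x_k}-\delta_{y_k})$ is a small addition.
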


\begin{proof}
	 Assume that $T_f \in\T_p(\F(X),E)$. Then given $N\in\N$ and $(x_1,\ldots,x_N)$, $(y_1,\ldots,y_N) \in X^N$ we have
	\begin{align*}
		\left(\int_{0}^{1}\left\| \sum_{k=1}^N r_k(t)(f(x_k)-f(y_k)) \right\|_E^2 dt \right)^{\frac{1}{2}} &= \left(\int_{0}^{1}\left\| \sum_{k=1}^N r_k(t)(T_f(\delta_X(x_k))-T_f(\delta_X(y_k))) \right\|_E^2 dt \right)^{\frac{1}{2}}\\
		&= \left(\int_{0}^{1}\left\| \sum_{k=1}^{N} r_k(t)T_f(\delta_{x_k}-\delta_{y_k}) \right\|_E^2 dt \right)^{\frac{1}{2}}\\ 
		&\leq \|T_f\|_{\T_p}\left(\sum_{k=1}^{N} \|\delta_{x_k}-\delta_{y_k}\|_{\F(X)}^p \right)^{\frac{1}{p}}= \|T_f\|_{\T_p}\left(\sum_{k=1}^{N} \|x_k-y_k\|_{X}^p \right)^{\frac{1}{p}}.
	\end{align*}

	Thus $f \in \T_p^{\Lip}(X,E)$ with $\|f\|_{\T_p^{\Lipnorm}} \leq \|T_f\|_{\T_p}$, as desired.
\end{proof}

\begin{proposition}\label{Prop 2.10}
	Let $p\in(1,2]$. Then $\T_p \circ \Lip(X,E) \subseteq \T_p^{\Lip}(X,E)$ and $\|f\|_{\T_p^{\Lipnorm}} \leq \|f\|_{\T_p \circ \Lip}$ for all $f \in \T_p\circ\Lip(X,E)$.
\end{proposition}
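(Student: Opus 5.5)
The plan is to use the ideal property of $[\T_p^{\Lip},\left\|\cdot\right\|_{\T_p^{\Lipnorm}}]$ from Theorem \ref{Cor 2.8}, after recognising that a bounded linear operator of type $p$ is itself a Lipschitz function of type $p$. Alternatively, one can estimate directly through any factorization. We follow the direct route, which mirrors the proof of Lemma \ref{Lem 2.9}.

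Let $f\in\T_p\circ\Lip(X,E)$ and fix $\varepsilon>0$. Choose a Banach space $F$, a map $h\in\Lip(X,F)$ and an operator $S\in\T_p(F,E)$ with $f=S\circ h$ and $\|S\|_{\T_p}\Lipnorm(h)\leq \|f\|_{\T_p\circ\Lip}+\varepsilon$. Given $N\in\N$ and $(x_1,\ldots,x_N)$, $(y_1,\ldots,y_N)\in X^N$, linearity of $S$ gives $f(x_k)-f(y_k)=S(h(x_k)-h(y_k))$. Applying the linear type inequality (\ref{eq 1}) to the vectors $h(x_k)-h(y_k)\in F$ then yields
\begin{align*}
\left(\int_0^1\left\|\sum_{k=1}^N r_k(t)(f(x_k)-f(y_k))\right\|_E^2dt\right)^{\frac12}
&\leq \|S\|_{\T_p}\left(\sum_{k=1}^N\|h(x_k)-h(y_k)\|_F^p\right)^{\frac1p}\\
&\leq \|S\|_{\T_p}\Lipnorm(h)\left(\sum_{k=1}^N\|x_k-y_k\|_X^p\right)^{\frac1p}.
\end{align*}

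By the remark after Definition \ref{Def 2.1}, it is enough to verify the case $\lambda_k=1$. Alternatively, the $\lambda_k$ can be absorbed directly: replace $h(x_k)-h(y_k)$ by $\lambda_k(h(x_k)-h(y_k))$ in (\ref{eq 1}), which produces the factor $|\lambda_k|^p$ on the right-hand side. This shows $f\in\T_p^{\Lip}(X,E)$ with $\|f\|_{\T_p^{\Lipnorm}}\leq \|f\|_{\T_p\circ\Lip}+\varepsilon$, and letting $\varepsilon\to0$ concludes the proof.

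There is no real obstacle. The only point needing care is that the linear type inequality is applied to differences $h(x_k)-h(y_k)$, which is legitimate because (\ref{eq 1}) holds for arbitrary finite families in $F$. Rewriting $f(x_k)-f(y_k)$ as $S$ applied to that difference is exactly where the linearity of $S$ is used.
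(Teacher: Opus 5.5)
Your proof is correct, but it takes a different route from the paper's. The paper linearizes through the Lipschitz-free space. From $f=S\circ h$ it deduces $T_f=S\circ T_h$ on $\F(X)$. The ideal property of $[\T_p,\left\|\cdot\right\|_{\T_p}]$ then gives $T_f\in\T_p(\F(X),E)$ with $\|T_f\|_{\T_p}\leq\|S\|_{\T_p}\Lipnorm(h)$, and Lemma \ref{Lem 2.9} finishes the proof.

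You instead apply the linear type inequality for $S$ directly to the differences $h(x_k)-h(y_k)$ (or to $\lambda_k(h(x_k)-h(y_k))$), and then use the Lipschitz estimate for $h$. In effect you run the proof of Lemma \ref{Lem 2.9} with $(F,h)$ in place of $(\F(X),\delta_X)$, so the detour through $\F(X)$ is no longer needed.

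What your version buys is that it is more elementary and self-contained:
\begin{itemize}
\item[(a)] It needs neither the universal property of $\F(X)$ nor the uniqueness of the linearization. The identity $T_f=S\circ T_h$ really rests on the density of $\mathrm{span}\,\delta_X(X)$ in $\F(X)$, not merely on $\delta_X$ being an isometric embedding, as the paper's wording suggests.
\item[(b)] It treats the scalars $\lambda_k$ explicitly, instead of relying on the reduction to $\lambda_k=1$.
\end{itemize}

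What the paper's route buys is a slightly stronger intermediate fact: the linearization $T_f$ itself has type $p$, with $\|T_f\|_{\T_p}\leq\|f\|_{\T_p\circ\Lip}$. Combined with the trivial factorization $f=T_f\circ\delta_X$, this shows that $f\in\T_p\circ\Lip(X,E)$ if and only if $T_f\in\T_p(\F(X),E)$, with $\|f\|_{\T_p\circ\Lip}=\|T_f\|_{\T_p}$. Your direct estimate does not record this.
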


\begin{proof}
	Let $f \in \T_p\circ\Lip(X,E)$. Then $f=S\circ h$, where $S \in \T_p(F,E)$ and $h\in\Lip(X,F)$. Note that
	$$
	T_f \circ \delta_X = f = S \circ h = S \circ T_h \circ \delta_X.
	$$

	Since $\delta_X$ is an isometric embedding we obtain $T_f=S\circ T_h$, and by the ideal property of $\T_p$ (see, e.g., \cite[21.2.2]{Pie-80}) we have that $T_f \in \T_p(\F(X),E)$. Thus by Lemma \ref{Lem 2.9} we deduce $f \in \T_p^{\Lip}(X,E)$ with
	$$
	\|f\|_{\T_p^{\Lipnorm}} \leq \|T_f\|_{\T_p} = \|S \circ T_h\|_{\T_p} \leq \|S\|_{\T_p}\|T_h\|=\|S\|_{\T_p}\Lipnorm(h).
	$$

	Just taking the infimum over all such factorizations of $f$ we conclude that $\|f\|_{\T_p^{\Lipnorm}} \leq \|f\|_{\T_p\circ\Lip}$.
\end{proof}

We now introduce some composition results concerning type and cotype of Lipschitz functions. To this end, we recover the subclasses of bounded linear operators given in Section \ref{Sec 1}.

\begin{proposition}\label{Prop 2.12}
	\begin{itemize}
		\item[$(i)$] Let $q\in[1,\infty)$. If $S \in \C_q(E,F)$ and $f \in \T_q \circ \Lip(X,E)$, then $S \circ f \in \F_2^{\Lip}(X,F)$ with $\|S\circ f\|_{\F_2^{\Lipnorm}} \leq \|S\|_{\C_q}\|f\|_{\T_q\circ \Lip}$.
		\item[$(ii)$] Let $p \in (1,2]$ and $q\in[2,\infty)$. If $S \in \C_q(E,F)$ and $f \in \Pi_p \circ \Lip(X,E)$, then $S\circ f \in \Pi_2^{\Lip}(X,F)$ with $\pi_2^{\Lipnorm}(S \circ f) \leq \|S\|_{\C_q}\|f\|_{\Pi_p\circ\Lip}$.
		\item[$(iii)$] Let $q \in (1,2]$. If $S \in \I_2(E,F)$ and $f \in \C_{q^*} \circ \Lip(X,E)$, then $S \circ f \in \I_q^{\Lip}(X,F)$ with $\|S \circ f\|_{\I_q^{\Lipnorm}} \leq \|S\|_{\I_2}\|f\|_{\C_{q^*}\circ\Lip}$.
	\end{itemize}
\end{proposition}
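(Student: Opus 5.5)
All three parts follow one pattern. Write $f=R\circ h$ with $R$ in the relevant linear operator ideal, $h\in\Lip(X,G)$ for some Banach space $G$, and $\|R\|\,\Lipnorm(h)$ within $\varepsilon$ of $\|f\|_{\A\circ\Lip}$. Then $S\circ f=(S\circ R)\circ h$. Next I would invoke a classical linear composition theorem from \cite{Pie-80}, or from Diestel--Jarchow--Tonge \cite{DieJarTon-95}, to place $S\circ R$ in a linear ideal $\mathcal B$ with $\|S\circ R\|_{\mathcal B}\leq \|S\|\,\|R\|$. Finally I would pass from $\mathcal B\circ\Lip$ to the Lipschitz version $\mathcal B^{\Lip}$, using that each of $\F_2^{\Lip}$, $\Pi_2^{\Lip}$, $\I_q^{\Lip}$ contains $\mathcal B\circ\Lip$ with norm domination. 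This last step is the Lipschitz ideal property with $h$ as the Lipschitz right factor, or equivalently the Lipschitz-free space argument of Proposition \ref{Prop 2.10}: $T_{S\circ f}=S\circ R\circ T_h$ with $\|T_h\|=\Lipnorm(h)$. Taking infima over factorizations gives the norm estimates.

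For (i) the linear input is Kwapie\'n's theorem in its operator form: if $R$ has type $q$ and $S$ has cotype $q$, then $S\circ R$ factors through a Hilbert space, so $S\circ R\in\F_2$ with $\|S\circ R\|_{\F_2}\leq\|S\|_{\C_q}\|R\|_{\T_q}$. Only $q=2$ is nontrivial; otherwise type $q$ or cotype $q$ degenerates by the linear analogue of Remark \ref{Rem 2.2}. I would then check that $T\circ h\in\F_2^{\Lip}$ whenever $T\in\F_2$ and $h$ is Lipschitz, with $\|T\circ h\|_{\F_2^{\Lipnorm}}\leq\|T\|_{\F_2}\Lipnorm(h)$. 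This should come straight from the definition in \cite{AchTia-24}, or from the fact that $[\F_2^{\Lip},\|\cdot\|]$ is a Banach Lipschitz ideal containing linear $2$-factorable maps isometrically.

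For (ii) I would use the classical result that cotype $q$ composed with $p$-summing is $2$-summing: for $R\in\Pi_p$ with $p\le 2$ and $S\in\C_q$, $S\circ R\in\Pi_2$ with $\pi_2(S\circ R)\leq\|S\|_{\C_q}\pi_p(R)$. I would then transfer to the Lipschitz setting through Farmer--Johnson \cite{FarJoh-09}: for $T\in\Pi_2$ linear and $h$ Lipschitz,
\[
\Bigl(\sum\|T h(x_k)-T h(y_k)\|^2\Bigr)^{1/2}\leq \pi_2(T)\sup_{\phi\in B_{G^*}}\Bigl(\sum|\phi(h(x_k))-\phi(h(y_k))|^2\Bigr)^{1/2},
\]
and $\phi\circ h/\Lipnorm(h)\in B_{\Lip(X)}$, which yields $\pi_2^{\Lipnorm}(T\circ h)\leq \pi_2(T)\Lipnorm(h)$.

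For (iii) the linear input is that a $2$-integral operator composed after an operator of cotype $q^*$ is $q$-integral, with $\|S\circ R\|_{\I_q}\leq\|S\|_{\I_2}\|R\|_{\C_{q^*}}$. This is the step I expect to be the main obstacle, since the exact statement and its constant must be found in \cite{DieJarTon-95}, in the chapters on cotype and integral operators, or derived by duality from the type/$p$-summing interplay. The Lipschitz transfer is again routine: $\I_q^{\Lip}$ is a Lipschitz ideal \cite{AchRueSanYah-16} whose linear elements are the $q$-integral operators with the same norm, so $(S\circ R)\circ h\in\I_q^{\Lip}$ with the expected bound. Letting $\varepsilon\to0$ finishes all three parts.
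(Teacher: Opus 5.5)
Your proposal is correct and follows the paper's proof. You factor $f=R\circ h$, place $S\circ R$ in the appropriate linear ideal by classical composition results, pass from $\mathcal{B}\circ\Lip$ to $\mathcal{B}^{\Lip}$, and take the infimum over factorizations. For the linear inputs the paper simply cites \cite[21.4.6]{Pie-80} for Kwapie{\'n}'s $\C_q\circ\T_q\subseteq\F_2$, \cite[21.4.8]{Pie-80} for (ii), and \cite[21.4.7]{Pie-80} for $\I_2\circ\C_{q^*}\subseteq\I_q$ in (iii), which is the reference you were looking for.

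The only differences lie in how the transfer step is justified. The paper quotes \cite[Proposition 2.9]{AchTia-24} with \cite[p.~828]{Saa-17}, \cite[Propositions 4.5 and 4.8]{BelChe-18}, and \cite[Remark 2.7]{AchRueYah-17}. You use instead the Lipschitz ideal property, linearization through $\F(X)$, or, in (ii), a direct Farmer--Johnson estimate; all of these are fine. Note also that the linear input in (ii) is immediate from $\Pi_p\subseteq\Pi_2$ with $\pi_2\le\pi_p$ for $p\le 2$, together with $\|S\|\le\|S\|_{\C_q}$.
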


\begin{proof}
	$(i):$ Since $f \in \T_q\circ\Lip(X,E)$ there exist a map $h\in\Lip(X,Z)$ and $T\in \T_q(Z,E)$ so that $f = T \circ h$. Thus note that $S\circ T \in \C_q \circ \T_q(Z,F)$, and by \cite[21.4.6]{Pie-80} we have that $S \circ T \in \F_2(Z,F)$ with $\|S \circ T\|_{\F_2} \leq \|S\|_{\C_q}\|T\|_{\T_q}$. Consequently $S \circ f = S \circ T \circ h \in \F_2 \circ \Lip(X,F) = \F_2^{\Lip}(X,F)$ by \cite[Proposition 2.9]{AchTia-24}. Furthermore, applying \cite[p. 828]{Saa-17} we obtain
	$$
	\|S\circ f\|_{\F_2^{\Lipnorm}} = \|S \circ f\|_{\F_2 \circ \Lip} \leq \|S \circ T\|_{\F_2}\Lipnorm(h) \leq \|S\|_{\C_q}\|T\|_{\T_q}\Lipnorm(h),
	$$
	and taking the infimum over all the factorizations of $f$ as in the beginning of the proof we conclude that $\|S \circ f\|_{\F_2^{\Lipnorm}} \leq \|S\|_{\C_q}\|f\|_{\T_q\circ \Lip}$.
	
	$(ii):$ This statement can be proven similarly to the first one but replacing \cite[21.4.6]{Pie-80} and \cite[p. 828]{Saa-17} by \cite[21.4.8]{Pie-80} and \cite[Proposition 4.5 $(i)$ and Proposition 4.8]{BelChe-18}, respectively.

	$(iii):$ This assertion follows analogously to $(i)$ but replacing \cite[21.4.6]{Pie-80} and \cite[p. 828]{Saa-17} by \cite[21.4.7]{Pie-80} and \cite[Remark 2.7]{AchRueYah-17}, respectively.
\end{proof}

Finally, we focus on the study of the maximality for the ideal $[\T_p^{\Lip},\left\|\cdot\right\|_{\T_p^{\Lipnorm}}]$. Towards this end, let us recall that a Banach Lipschitz ideal $[\A^{\Lip},\left\|\cdot\right\|_{\A^{\Lip}}]$ is called:
\begin{itemize}
	\item[$(i)$] regular if for any $f \in \Lip(X,E)$ such that $\kappa_E \circ f \in \A^{\Lip}(X,E^{**})$, then $f \in \A^{\Lip}(X,E)$ and $\|f\|_{\A^{\Lip}} = \|\kappa_E \circ f\|_{\A^{\Lip}}$.
	\item[$(ii)$] ultrastable if for every pair $((X_i)_{i\in I},(E_i)_{i\in I})$ of families of Banach spaces, every family of Lipschitz functions $(f_i)_{i\in I}$ so that $f_i \in \A^{\Lip}(X_i,E_i)$ with $\sup_{i\in I} \|f_i\|_{\A^{\Lip}} < \infty$ and every ultrafilter $\mathcal{U}$ of $I$, then the Lipschitz map $(f_i)^{\mathcal{U}}: (X_i)_{\mathcal{U}} \to (E_i)_{\mathcal{U}}$ defined by $(x_i)_{\mathcal{U}} \mapsto (f_i(x_i))_{\mathcal{U}}$ is in $\A^{\Lip}((X_i)_{\mathcal{U}}, (E_i)_{\mathcal{U}})$ with $\|(f_i)^{\mathcal{U}}\|_{\A^{\Lip}} \leq \sup_{i\in I}\|f_i\|_{\A^{\Lip}}$.
	\item[$(iii)$] maximal if $[\A^{\Lip},\left\|\cdot\right\|_{\A^{\Lip}}] = [(\A^{\Lip})^{\mathrm{max}},\left\|\cdot\right\|_{(\A^{\Lip})^{\mathrm{max}}}]$, where the components of this Banach Lipschitz ideal are defined, for each pair of Banach spaces $(X,E)$, by
	\begin{align*}
	(\A^{\Lip})^{\mathrm{max}}(X,E) &= \left\{ f \in \Lip(X,E): \left\|f\right\|_{(\A^{\Lip})^{\mathrm{max}}}=\sup_{\substack{Z \in \mathrm{FIN}(X)\\ Y \in \mathrm{COFIN}(E)}} \|q_Y^E \circ f \circ \iota_Z^X\|_{\A^{\Lip}} <\infty \right\}.
	\end{align*}
\end{itemize}

According to \cite[Theorem 3.6]{AlbTur-24} a Banach Lipschitz ideal is maximal if and only if it is regular and ultrastable. Bearing in mind this result we are in a position to study the maximality of the desired ideal.

\begin{theorem}
	Let $p\in(1,2]$. Then $[\T_p^{\Lip},\left\|\cdot\right\|_{\T_p^{\Lipnorm}}]$ is maximal.
\end{theorem}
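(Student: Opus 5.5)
We will use \cite[Theorem 3.6]{AlbTur-24}: since $[\T_p^{\Lip},\left\|\cdot\right\|_{\T_p^{\Lipnorm}}]$ is a Banach Lipschitz ideal by Theorem \ref{Cor 2.8}, it suffices to show that it is regular and ultrastable. Throughout we use the form of (\ref{eq 3}) with all $\lambda_k=1$, which is equivalent as noted after Definition \ref{Def 2.1}.

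\emph{Regularity.} Let $f\in\Lip(X,E)$ with $\kappa_E\circ f\in\T_p^{\Lip}(X,E^{**})$. Since $\kappa_E$ is a linear isometry, for every $t\in[0,1]$ we have
$$
\Big\|\sum_{k=1}^N r_k(t)(f(x_k)-f(y_k))\Big\|_E=\Big\|\sum_{k=1}^N r_k(t)(\kappa_E f(x_k)-\kappa_E f(y_k))\Big\|_{E^{**}}.
$$
Integrating shows that the left-hand side of (\ref{eq 3}) is the same for $f$ and for $\kappa_E\circ f$. Hence $f\in\T_p^{\Lip}(X,E)$ with $\|f\|_{\T_p^{\Lipnorm}}\le\|\kappa_E\circ f\|_{\T_p^{\Lipnorm}}$. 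The reverse inequality comes from the ideal property (iii) with $T=\kappa_E$ and $\|\kappa_E\|=1$, which gives equality.

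\emph{Ultrastability.} Let $f_i\in\T_p^{\Lip}(X_i,E_i)$ with $M=\sup_i\|f_i\|_{\T_p^{\Lipnorm}}<\infty$, and let $\U$ be an ultrafilter on $I$. Since $\Lipnorm(f_i)\le M$, the map $(f_i)^{\U}$ is well defined and $M$-Lipschitz on $(X_i)_\U$, and it preserves zero. Fix $N$ and points $\tilde x_k=(x_k^i)_\U$, $\tilde y_k=(y_k^i)_\U$ for $k=1,\ldots,N$.

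The key observation is that $r_1,\ldots,r_N$ are constant on each of the $2^N$ dyadic intervals of length $2^{-N}$, so each Rademacher average is a finite average:
$$
\int_0^1\Big\|\sum_{k=1}^N r_k(t)z_k\Big\|^2dt=2^{-N}\sum_{\varepsilon\in\{\pm1\}^N}\Big\|\sum_{k=1}^N\varepsilon_k z_k\Big\|^2.
$$
In the ultraproduct, norms are limits along $\U$, and a finite sum of such limits is the limit of the sum. Therefore
$$
\int_0^1\Big\|\sum_{k=1}^N r_k(t)\big((f_i)^\U(\tilde x_k)-(f_i)^\U(\tilde y_k)\big)\Big\|^2dt=\lim_\U 2^{-N}\sum_{\varepsilon}\Big\|\sum_{k=1}^N\varepsilon_k\big(f_i(x_k^i)-f_i(y_k^i)\big)\Big\|_{E_i}^2 .
$$
For each $i$ this quantity is at most $M^2\big(\sum_k\|x_k^i-y_k^i\|^p\big)^{2/p}$. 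Taking the limit along $\U$ turns the right-hand side into $M^2\big(\sum_k\|\tilde x_k-\tilde y_k\|^p\big)^{2/p}$, because $\|\tilde x_k-\tilde y_k\|=\lim_\U\|x_k^i-y_k^i\|$ and the expression is continuous in finitely many variables. This gives $\|(f_i)^\U\|_{\T_p^{\Lipnorm}}\le M$.

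The only delicate point is exchanging the integral with the ultralimit, and the dyadic reduction to a finite sum of $2^N$ terms handles it. Finally, representatives must be chosen so that norms along $\U$ are bounded. Every point of $(X_i)_\U$ has a bounded representative $(x^i)$, and $f_i(x^i)$ is then bounded because $\|f_i(x^i)\|\le M\|x^i\|$, so no further care is needed. Regularity and ultrastability together give maximality by \cite[Theorem 3.6]{AlbTur-24}.
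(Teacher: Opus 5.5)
Your proof is correct and follows the paper's argument essentially step for step. Regularity comes from $\kappa_E$ being an isometry, with the ideal property giving the reverse inequality. Ultrastability comes from treating the Rademacher average as a finite sum, so the ultralimit can be exchanged with the integral, and maximality then follows from \cite[Theorem 3.6]{AlbTur-24}. Your explicit formula with $2^{-N}\sum_{\varepsilon\in\{\pm1\}^N}$ and your remark on bounded representatives (and on $(f_i)^{\mathcal{U}}$ being well defined) spell out what the paper leaves implicit.
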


\begin{proof}
	Firstly, we show the regularity of $[\T_p^{\Lip},\left\|\cdot\right\|_{\T_p^{\Lipnorm}}]$. Let us suppose that $\kappa_E \circ f \in \T_p^{\Lip}(X,E^{**})$. Since $\kappa_E$ is an isometric embedding we have
	\begin{align*}
		\left(\int_{0}^{1} \left\| \sum_{k=1}^N r_k(t)(\kappa_E(f(x_k))-\kappa_E(f(y_k))) \right\|_{E^{**}}^2 dt \right)^{\frac{1}{2}} &= \left(\int_{0}^{1} \left\| \kappa_E\left( \sum_{k=1}^{N} r_k(t)(f(x_k)-f(y_k)) \right) \right\|_{E^{**}}^2 dt \right)^{\frac{1}{2}}\\
		&= \left(\int_{0}^{1}\left\| \sum_{k=1}^{N} r_k(t)(f(x_k)-f(y_k)) \right\|_E^2 dt \right)^{\frac{1}{2}}\\
		&\leq \|\kappa_E\circ f\|_{\T_p^{\Lipnorm}}\left( \sum_{k=1}^{N} \|x_k-y_k\|_X^p\right)^{\frac{1}{p}}
	\end{align*}
	for any $N\in\N$ and any $(x_1,\ldots,x_N)$, $(y_1,\ldots,y_N) \in X^N$. Thus $f \in \T_p^{\Lip}(X,E)$ with $\|f\|_{\T_p^{\Lipnorm}} \leq \|\kappa_E \circ f\|_{\T_p^{\Lipnorm}}$. The remaining inequality can be obtained from the ideal property of $[\T_p^{\Lip},\left\|\cdot\right\|_{\T_p^{\Lipnorm}}]$.

	Finally, we prove the ultrastability of the Lipschitz ideal $[\T_p^{\Lip},\left\|\cdot\right\|_{\T_p^{\Lipnorm}}]$. Let $(X_i)_{i\in I}$, $(E_i)_{i\in I}$ be two families of Banach spaces, let $(f_i)_{i\in I}$ be a family of Lipschitz functions so that $f_i \in \T_p^{\Lip}(X_i,E_i)$ with $\sup_{i\in I} \|f_i\|_{\T_p^{\Lipnorm}} < \infty$ and let $\mathcal{U}$ be an ultrafilter of $I$. Then, given $N\in\N$ and $((x_1^i)_\U,\ldots, (x_N^i)_\U)$, $((y_1^i)_\U, \ldots, (y_N^i)_\U) \in ((X_i)_\U)^N$, we have
	\begin{align*}
		&\left( \int_{0}^{1} \left\| \sum_{k=1}^{N} r_k(t)\left((f_i)^\U((x_k^i)_\U)-(f_i)^\U((y_k^i)_\U)\right) \right\|_{(E_i)_\U}^2 dt \right)^{\frac{1}{2}}\\
		&= \left( \int_{0}^{1} \left\| \sum_{k=1}^{N} r_k(t)\left((f_i(x_k^i))_\U-(f_i(y_k^i))_\U\right) \right\|_{(E_i)_\U}^2 dt \right)^{\frac{1}{2}}\\
		&= \left( \int_{0}^{1} \left\| \left(\sum_{k=1}^{N} r_k(t)(f_i(x_k^i)-f_i(y_k^i))\right)_\U \right\|_{(E_i)_\U}^2 dt \right)^{\frac{1}{2}}\\
		&= \lim_\U \left( \int_{0}^{1} \left\| \sum_{k=1}^{N} r_k(t)(f_i(x_k^i)-f_i(y_k^i)) \right\|_{E_i}^2 dt \right)^{\frac{1}{2}}\\
		&\leq \lim_\U \left(\|f_i\|_{\T_p^{\Lipnorm}} \left( \sum_{k=1}^{N} \|x_k^i-y_k^i\|_{X_i}^p \right)^{\frac{1}{p}}\right)\\
		&\leq \sup_{i\in I} \|f_i\|_{\T_p^{\Lipnorm}} \lim_\U \left( \sum_{k=1}^{N} \|x_k^i-y_k^i\|_{X_i}^p \right)^{\frac{1}{p}}\\
		&= \sup_{i\in I} \|f_i\|_{\T_p^{\Lipnorm}} \left( \sum_{k=1}^{N} \|(x_k^i)_\U-(y_k^i)_\U\|_{(X_i)_\U}^p \right)^{\frac{1}{p}},
	\end{align*}
	where we have taken into account that $\int_{0}^{1}\left\|\sum_{k=1}^{N}r_k(t)(f_i(x_k^i)-f_i(y_k^i))\right\|_{E_i}^2 dt$ is a finite sum in order to obtain the third equality, and that $\sum_{k=1}^{N}\|x_k^i-y_k^i\|_{X_i}^p$ is finite in order to obtain the last one. Consequently, we can ensure that $(f_i)^\U \in \T_p^{\Lip}((X_i)_\U,(E_i)_\U)$ with $\|(f_i)^\U\|_{\T_p^{\Lipnorm}} \leq \sup_{i\in I} \|f_i\|_{\T_p^{\Lipnorm}}$. Hence $[\T_p^{\Lip},\left\|\cdot\right\|_{\T_p^{\Lipnorm}}]$ is ultrastable. The proof concludes simply by considering \cite[Theorem 3.6]{AlbTur-24}.
	\end{proof}

	\subsection*{Acknowledgements} This work has been developed as a result of a research stay at the University of Lille, and I warmly thank Professor Mostafa Mbekhta for his hospitality throughout this process.

	\subsection*{Funding Declaration} Research partially supported by FPU23/03235 predoctoral fellowship of the Spanish Ministry of Universities and by Junta de Andaluc\'ia grant FQM194.

	\subsection*{Data availability} Not applicable.

	\subsection*{Conflict of interest} There are no relevant financial or non-financial interests to disclose.

\end{document}